\documentclass[11pt]{amsart}
\usepackage{geometry}                
\geometry{letterpaper}                   
\usepackage{graphicx,
	amssymb,
	amsmath,
	amsthm,
	bm,
	caption,
	dsfont,
	epstopdf,
	mathrsfs,
	mathabx,
	multirow,
	mathtools,
	dsfont,
	xcolor,
	tabu,
	subfig,
	hyperref}
\hypersetup{
    colorlinks=true,
    allcolors=blue
}
\definecolor{redy}{rgb}{0.84, 0.48, 0.47}
\definecolor{greeny}{rgb}{0.45, 0.68, 0.49}
\definecolor{bluey}{rgb}{0.42, 0.6, 0.78}
\DeclareGraphicsRule{.tif}{png}{.png}{`convert #1 `dirname #1`/`basename #1 .tif`.png}

\newcommand{\acts}{\lefttorightarrow}
\newcommand{\Aut}{\mathrm{Aut}}

\newcommand{\D}{\mathrm{D}}
\newcommand{\dist}{\mathrm{d}}
\newcommand{\Dir}{\mathscr{D}}

\newcommand{\F}{\mathcal{F}}

\newcommand{\fu}{\varepsilon}
\newcommand{\g}{\gamma}
\newcommand{\G}{\Gamma}
\newcommand{\Gal}{\mathfrak{G}}

\newcommand{\HtH}{\Hyp^2\times\Hyp^2}

\newcommand{\Hyp}{\mathcal{H}}

\newcommand{\M}{\mathrm{M}}

\newcommand{\Mob}{M\"{o}bius }
\newcommand{\NN}{\mathbb{N}}
\newcommand{\PSL}{\mathrm{PSL}}
\newcommand{\QQ}{\mathbb{Q}}
\newcommand{\R}{\mathscr{R}}
\newcommand{\RR}{\mathbb{R}}

\newcommand{\T}{\mathrm{T}}
\newcommand{\Tsec}{\mathcal{T}}
\newcommand{\tr}{\mathrm{tr}}
\newcommand{\U}{\mathrm{U}}

\newcommand{\ZZ}{\mathbb{Z}}

\newtheorem{thm}{Theorem}[section]
\newtheorem{lem}[thm]{Lemma}
\newtheorem{prop}[thm]{Proposition}
\newtheorem{cor}[thm]{Corollary}

\newtheorem{defn}[thm]{Definition}
\newtheorem{rem}[thm]{Remark}

\title[Cusp Shapes of Hilbert-Blumenthal Surfaces]
	{Cusp Shapes of Hilbert-Blumenthal Surfaces}
\date{\today}

\newcommand{\Address}{{
\begin{center}
   \textsc{{\bf Joseph Quinn}\\
	\email{josephanthonyquinn@gmail.com}\\
	National Museum of Mathematics\\
	134 W 26th Street, Suite 4-S\\
	New York, NY 10001\\
	USA\\
	\vspace{12pt}	
	{\bf Alberto Verjovsky}\\
 	\email{alberto@matcuer.unam.mx}\\
	Instituto de Matem\'aticas, Unidad Cuernavaca\\
 	Universidad Nacional Aut\'onoma de M\'exico\\
	Av Universidad s/n, Col. Lomas Chamilpa\\
	CP 62210, Cuernavaca, Morelos\\
	M\'exico}\\
	\vspace{12pt}
 \end{center}
}}


\begin{document}

\maketitle
\Address

\begin{abstract}
	We introduce a new fundamental domain $\R_n$
		for a cusp stabilizer of a Hilbert modular group $\G$
		over a real quadratic field $K=\QQ(\sqrt n)$.
	This is constructed as the union
		of Dirichlet domains for the maximal unipotent group,
		over the leaves in a foliation of $\Hyp^2\times\Hyp^2$.
	The region $\R_n$
		is the product of $\RR^+$
		with a $3$-dimensional
		tower $\Tsec_n$
		formed by deformations of lattices
		in the ring of integers $\ZZ_K$,
		and makes explicit the cusp cross section's Sol $3$-manifold
		structure and Anosov diffeomorphism.
	We include computer generated images and data
		illustrating various examples.
\end{abstract}

\section{Introduction}

A Hilbert-Blumenthal group is some $\G=\PSL_2(\ZZ_K)$
	where $\ZZ_K$
	is the ring of integers of a real quadratic field $K$,
	and a Hilbert-Blumenthal surface is a quotient $\M_\G=(\HtH)/\G$
	of the product of two hyperbolic upper half-planes $\Hyp^2$
	by the \Mob action of $\G$
	under Galois conjugation.
As a generalization of modular curves,
	$\M_\G$
	represents the moduli space of Abelian varieties
	with real multiplication
	by $\ZZ_K$ \cite{Mcmullen2007}
	and these complex surfaces are a prototype for Shimura varieties,
	placing them at an interesting juncture of geometry,
	topology and number theory.
	
Here we are motivated by the search for a fundamental domain
	for the action of $\G$
	on $\HtH$
	which accurately reflects the geometry of $\M_\G$,
	a topic that dates back to Blumenthal
	\cite{Blumenthal1903,Blumenthal1904}.
Historically,
	our understanding of such a domain
	has improved with our understanding of its cusps.
Maass \cite{Maass1940}
	showed that the number of cusps
	equals the class number of $K$,
	then Siegel \cite{Siegel1961}
	computed a fundamental domain
	as a union over one piece at each cusp using an alternative metric.
This yields a complex surface with quotient singularities
	and cusp singularities,
	which Hirzebruch \cite{Hirzebruch1971}
	showed how to smoothly compactify
	(see also \cite{Vandergeer2012}
	and \S21 of \cite{BarthEtc2015}).
While these advances have been fruitful in understanding
	arithmetic and topological properties,
	certain geometric properties had remained elusive.
A cusp (cross) section of $\M_\G$
	is a $3$-dimensional mapping torus
	of some Anosov diffeomorphism $\varphi$
	of the torus and,
	due to McReynolds \cite{Mcreynolds2006,Mcreynolds2008},
	every Sol $3$-manifold
	is commensurable to one of these cusp sections
	up to diffeomorphism.
However,
	there had previously been no combinatorial description
	of these in terms of their sides and the action of $\varphi$
	as an explicit side-pairing map.
Here we provide this,
	subsequently also modeling an example from every
	commensurability class of the Sol $3$-manifolds.
	
To do this,
	we weaken the product metric on $\HtH$
	to a semimetric $\delta$,
	which restricts to a scaled Euclidean metric on each leaf
	of a natural foliation of the space.
This allows us to build a fundamental domain $\R_n$
	for the maximal unipotent subgroup of the cusp stabilizer
	as a union of toroidal Dirichlet domains in the leaves
	with respect to $\delta$.
This region
	is a $3$-dimensional
	hypersurface $\Tsec_n$
	(modeling the cusp section $\T^3_\varphi$)
	crossed with $\RR^+$,
	up to uniformly scaling the metric.
The shape of $\Tsec_n$
	is described by lattices from the ring of integers $\ZZ_K$
	of $K$
	that deform along a common axis of symmetry,
	in a way that is effectively computable from the fundamental unit $\fu$
	of $\ZZ_K$.
The Anosov diffeomorphism is then the diagonal matrix
	with diagonal $(\fu,\fu^{-1})$,
	which glues one lattice slice
	to another.
We also provide a map $\Psi_n:\HtH\rightarrow\RR^3$
	with which one can plot the image of $\Tsec_n$
	to visualize the cusp section
	(see Figure \ref{fig:towers}).	
 
\section{Preliminaries}\label{sec:prelim}

\subsection{Hilbert-Blumenthal surfaces and cusp sections}
\label{sec:prelim sub:HBS}

Let $\Hyp^2$
	be the upper half-plane model for the hyperbolic plane
	and denote the usual metric by $\dist_{\Hyp^2}$.
We will be interested in the product space $\HtH$,
	in which the points are of the form $(x_1+y_1i,x_2+y_2i)$
	where $x_1,x_2\in\RR$
	and $y_1,y_2\in\RR^+$,
	and we fix this notation throughout.
For $\g=
	\begin{pmatrix}
		a & b\\
		c & d
	\end{pmatrix}\in\PSL_2(\RR)$
	and $p\in\Hyp^2$,
	let $\g(p)$
	denote the usual isometric action by \Mob transformations,
	$$\g(p)=\frac{ap+b}{cp+d}.$$
	
Let $K$
	be a real quadratic number field
	and let $\sigma$
	be the non-trivial element of the Galois group $\Gal(K:\QQ)$.
That is,
	$K=\QQ(\sqrt n)$
	for some square-free $n\in\NN$
	and $\forall a,b\in\QQ$,
	we have $\sigma(a+b\sqrt n)=(a-b\sqrt n)$.
Let $\ZZ_K$
	be the ring of integers of $K$,
	i.e. $\ZZ_K=\ZZ\oplus\ZZ\alpha$
	where
	$$\alpha:=
		\begin{cases}
			\sqrt n \quad&;\quad n\not\equiv_41\\
			\frac{1+\sqrt n}{2} \quad&;\quad n\equiv_41
		\end{cases}.$$	
Let $\G:=\PSL_2(\ZZ_K)$
	and for $\g\in\G$
	let 
	then $\sigma(\g)$
	denote the application of $\sigma$
	to the entries of $\g$.
Then
	$$\G\acts
		\HtH:
		\quad \g(p_1,p_2)=
			\big(\g(p_1),\sigma(\g)(p_2)\big)$$
	is a discrete action by isometries.

\begin{defn}\samepage\label{defn:cusp}
	\begin{enumerate}
		\item[]
		\item We call $\G$
				a \emph{Hilbert-Blumenthal group},
				and we call the orbifold $(\HtH)/\G$
				a \emph{Hilbert-Blumenthal surface},
				which we denote by $\M_\G$.
		\item For $p\in\partial(\HtH)$,
				the \emph{stabilizer of $p$ in $\G$},
				is $\Delta_\G(p):=\big\{\g\in \G \;\big|\; \g(p)=p\big\}$.
		\item The \emph{maximal unipotent subgroup (of $\G$
				at $p$)},
				denoted by $\U_\G(p)$,
				is the group of all unipotent elements of $\Delta_\G(p)$.
		\item When $\U_\G(p)\neq\O$,
				we say that that $\G$
				(or equivalently that $(\HtH)/\G$)
				has a \emph{cusp}
				at $p$,
				and in this case we call $\Delta_\G(p)$
				a \emph{cusp group}.
	\end{enumerate}
\end{defn}

\begin{rem}
	For $\g\in\Delta_\G(p)$,
		the condition that $\g\in\U_\G(p)$
		is equivalent to saying that $\g$
		has a unique fixed point as an isometry of
		$\Hyp^2\cup\partial\Hyp^2$,
		which lies in $\partial\Hyp^2$,
		i.e. that $|\tr(\g)|=2$.
\end{rem}

Every cusp group $\Delta_\G(p)$
	is conjugate in $\PSL_2(K)$
	to $\Delta_\G(\infty,\infty)$
	\cite[\S5.1]{Mcreynolds2006}.
Thus we take $p=(\infty,\infty)$,
	abbreviate $\Delta:=\Delta_\G(p)$
	and $\U:=\U_\G(p)$,
	and this incurs no loss of generality in discussing the cusp shape.
We denote $(\HtH)/\Delta$
	by $\M_\Delta$
	and observe that in a small neighborhood of the cusp,
	$\M_\G$
	and $\M_\Delta$
	coincide.
Such a neighborhood is called a \emph{cusp end},
	defined up to homeomorphism.

Matrices in $\Delta$
	are upper triangular,
	forcing their diagonal entries to be in the unit group $\ZZ_K^\times$.
But since $K$
	is a real quadratic field,
	$\ZZ_K^\times=\{\pm\fu^\ell\;|\;\ell\in\ZZ\}$
	where $\fu$
	is the \emph{fundamental unit}
	of $\ZZ_K$,
	defined by $\fu:=\text{min}\{z\in\ZZ_K^\times\;|\; z>1\}$
	(see \cite{Azuhata1984}
	for additional characterizations).
Thus we have
	$$\Delta=\left\{
		\begin{pmatrix}
			\fu^\ell & z\\
			0 & \fu^{-\ell}
		\end{pmatrix}
		\;\middle|\;
		\ell\in\ZZ,\ z\in\ZZ_K\right\}$$
	up to $\pm1$,
	recalling that opposite signs are identified in $\PSL_2(\RR)$.

Let $\tau_z:=
		\begin{pmatrix}
			1 & z\\
			0 & 1
		\end{pmatrix}$
	where $z\in\ZZ_K$.
Then $\forall z\in\ZZ_K$,
\begin{align}\label{tau}
	\tau_z(x_1+y_1i,x_2+y_2i)
		=\left(\big(x_1+z\big)+y_1i,\big(x_2+\sigma(z)\big)+y_2i\right),
\end{align}
	affecting only the real parts of the points.
A computation using the trace shows that
	$\U=\{\tau_z\;|\; z\in\ZZ_K\}$,
	hence $\U=\langle\tau_1,\tau_\alpha\rangle$.
	
Let $\eta_\ell:=
	\begin{pmatrix}
			\fu^\ell & 0\\
			0 & \fu^{-\ell}
	\end{pmatrix}$,
	then
\begin{align}\label{eta}
	\eta_\ell(x_1+y_1i,x_2+y_2i)
		=\big(\fu^{2\ell}(x_1+y_1i),\fu^{-2\ell}(x_2+y_2i)\big).
\end{align}	
Let $\D:=\{\eta_\ell\ \;|\; \ell\in\ZZ\}$,
	then $\D=\langle\eta_1\rangle$
	and $\Delta=\langle\tau_1,\tau_{\alpha},\eta_1\rangle$.
The full Hilbert-Blumenthal group is attained by including,
	in the generators,
	the element
	$\iota:=\begin{pmatrix}
		0 & 1\\
		-1 & 0
	\end{pmatrix}$,
	an inversion through the unit hemisphere in each factor.
That is $\G=\langle\tau_1,\tau_\alpha,\eta_1,\iota\rangle$.

The cusp group $\Delta$
	admits a semidirect product decomposition,
	as follows.
The group $\U$
	is a normal subgroup of $\Delta$
	and since $\ZZ_K\cong\ZZ\oplus\ZZ\alpha$
	as an additive group,
	$\U\cong\ZZ^2$.
Also,
	$\D\cong\ZZ$
	is a cyclic subgroup of $\Delta$
	and $\U$
	is invariant under conjugation by $\D$,
	in particular
	$\eta_\ell\cdot\tau_z\cdot\eta_{-\ell}=\tau_{2\ell z}$.
This action by conjugation defines a homomorphism
	$\D\rightarrow\Aut(\U)$,
	giving
\begin{align}\label{semidirectUD}
	\Delta\overset{\cong}{\longrightarrow}\U\rtimes\D:\quad
		\begin{pmatrix}
			\fu^\ell & z\\
			0 & \fu^{-\ell}
		\end{pmatrix}
		\mapsto
		(\tau_z,\eta_\ell).
\end{align}
This admits the topological interpretation that $\M_\Delta$
	is diffeomorphic to $\rm{T}^3_\varphi\times\RR^+$,
	where $\rm{T}^3_\varphi$
	is the infrasolv manifold (in the sense of \cite[\S2.4.3]{Mcreynolds2006})
	that fibers over the circle,
	with fiber the torus,
	and Anosov diffeomorphism $\varphi$
	\cite{Mcreynolds2008}.
We call $\rm{T}^3_\varphi$
	the \emph{cusp section}
	of $\M_\G$
	(or equivalently,
	of $\G$).
%

\subsection{Fundamental domains}\label{sec:prelim sub:FD}

A \emph{fundamental domain}
	for a group $G$
	acting on a topological space $\mathcal{X}$
	is a subspace of $\mathcal{X}$,
	which we denote by $R_G(\mathcal{X})$
	(or just $R_G$
	when $\mathcal{X}$
	is clear)
	such that $\underset{g\in G}{\bigcup}g(R_G)=\mathcal{X}$,
	and for all pairs $g,g'\in G$,
	the intersection $g(R_G)\cap g'(R_G)$
	has no interior.
Notice that this notation does not indicate any specific choice for the domain,
	and we will introduce different notation when we wish to indicate
	our particular construction.
	
Some aspects of fundamental domains
	$R_\G(\HtH)$
	have remained consistent since the classical approach
	while others have varied.
A common theme is the use of an intersection of some choices for
	$R_\U$,
	$R_\D$,
	and $R_{\langle\iota\rangle}$,
	to attain an initial approximation of the domain,
	formalized by G\"otzky \cite{Gotzky1928}
	and later termed a G\"otzky region \cite{Deutsch2010}.
Usually,
	this properly contains a fundamental domain for $\G$,
	and the boundary intersecting $R_{\langle\iota\rangle}$
	is difficult to describe
	(see Remark \ref{floors}).
However,
	$R_U\cap R_D$
	forms a true fundamental domain for the group $\Delta$
	due to the semidirect product structure $\Delta\cong\U\rtimes\D$.

\subsubsection{\bf \boldmath$R_\D$ and \boldmath$\R_\D$}
\label{sec:prelim sub:RD}

This aspect has remained consistent in the literature since Blumenthal 
	\cite{Blumenthal1903},
	and will be used here as well up to a minor alteration.
For each $y_1,y_2\in\RR^+$,
	let
\begin{align}\label{FH}
	\F(y_1,y_2)&:=
		\big\{(x_1+y_1i,x_2+y_2i)\;|\;x_1,x_2\in\RR\big\}
		\subset\HtH,
\end{align}
	the pair of horizontal lines at $y_1$
	in the first factor and at $y_2$
	in the second factor.
Observe that
\begin{align*}
	\underset{y_1,y_2\in\RR^+}{\bigsqcup}\F(y_1,y_2)=\HtH,
\end{align*}
	so $\F:=\big\{\F(y_1,y_2)\mid y_1,y_2\in\RR^+\big\}$
	foliates $\HtH$.
There is a natural bijection
\begin{align}\label{Pi}
	\Pi:\F\rightarrow\RR^+\times\RR^+,\quad
		\F(y_1,y_2)\mapsto(y_1,y_2).
\end{align}
By (\ref{eta}),
	$\D$
	permutes the leaves via
\begin{align}\label{etaF}
	\eta_\ell\left(\F(y_1,y_2)\right)=\F(\fu^{2\ell}y_1,\fu^{-2\ell}y_2),
\end{align}
	thus in the image under $\Pi$,
	$\D$
	preserves each hyperbola in the set
	$\left\{y_1y_2=c\;\middle|\; y_1,y_2\in\RR^+\right\}_{c\,\in\RR^+}$,
	which foliates $\HtH$
	under $\Pi^{-1}$.
	
A natural fundamental domain for the action of $\D$
	is thus obtained as the image under $\Pi^{-1}$
	of the wedge between a pair of rays approaching the origin,
	identified by $\eta_1$.
We differ from the classical approach in how we choose the pair of rays,
	(justified in \S\ref{sec:CS}),
	and define our fundamental domain for $\D$
	as
\begin{align}\label{RD}
	\R_\D:=\Pi^{-1}\left(\big\{(y_1,y_2)\in\RR^+\times\RR^+\;\big|\;
		y_2\leq y_1<\fu^4y_2\big\}\right).
\end{align}

\begin{figure}[!htb]
	\centering
	\includegraphics[scale=.45]{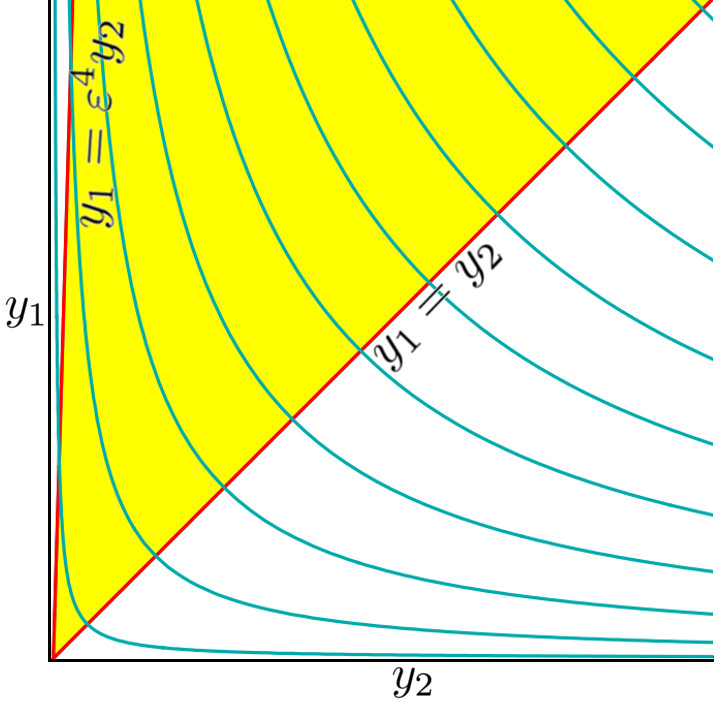}
	\caption{\cite{Mathematica}
		The fundamental domain $\R_\D$
			is shown (in yellow),
			for the action of $\D$
			on $\F$,
			along with
			a $\D$-invariant
			foliation by hyperbolas (in cyan).}
	\label{fig:FDD}
\end{figure}

\subsubsection{\bf \boldmath$R_\U$}
\label{sec:prelim sub:FD sub:U}

By (\ref{tau}),
	$\U$
	fixes each leaf of the foliation $\F$,
	and since $\Delta=\U\rtimes\D$,
	we have that $R_\U\cap\R_\D$
	is a fundamental domain for $\Delta$
	regardless of one's choice of $R_\U$.
Topologically,
	each quotient $\F(y_1,y_2)/\U$
	is a flat torus and $\F/\U$
	is foliated by these tori.
If one is interested in arithmetic properties of $\M_\G$
	as a topological manifold,
	one can represent orbits of $\U$
	as Siegel \cite{Siegel1961}
	does,
	using a reduction with respect to the field norm on $K$,
	but we are interested in a more geometrically accurate description
	which,
	in the following ways,
	resembles the classical approach.
	
Define the \emph{height}
	of a leaf $\F(y_1,y_2)$
	in the foliation $\F$,
	or of a point $(x_1+y_1i,x_2+y_2i)$
	in the leaf,
	as the product $y_1y_2$.
A set of points at some fixed height corresponds to
	the image under $\Pi^{-1}$
	of a hyperbola in Figure \ref{fig:FDD}.
Also,
	under $\Pi^{-1}$,
	a set of tori at the piece
	of a hyperbola between the rays $y_1=y_2$
	and $y_1=\fu^4y_2$
	gives a geometric representation of the cusp section
	$\T^3_\varphi$,
	and the tori at the entire wedge between these rays
	gives a diffeomorphic representation of the cusp end.

Define the \emph{level}
	of a ray as in Figure \ref{fig:FDD},
	or of a point $(x_1+y_1i,x_2+y_2i)$
	in a leaf on this ray,
	as the quotient $y_1/y_2$.
There is a natural bijection at each leaf
\begin{align}\label{pi}
	\pi_{y_1,y_2}:\F(y_1,y_2)\rightarrow\RR^2,\quad
		(x_1+y_1i,x_2+y_2i)\mapsto(x_1,x_2).
\end{align}
Under these bijections,
	points at the same level (varying the height)
	correspond to spaces where the metric
	scales uniformly,
	and points at the same height (varying the level)
	correspond to spaces where the metric
	expands along one axis
	and contracts along another.	

The classical approach to $R_\U$
	is to use the same rectangle for every torus in the foliation,
	making $R_\U$
	an infinitely tall parallelepiped.
While this gives a straightforward fundamental domain for $\Delta$
	in analogy to the classical cusp group of $\PSL_2(\ZZ)$,
	it does not account for the change in metric that occurs
	in the leaves as the level varies,
	and obscures the action of $\varphi$
	as a side-pairing upon passing to the quotient $\T^3_\varphi$.
We will represent (in \S\ref{sec:lattices})
	the orbit of $\U$
	by tori that change shape with the deforming metric
	as the leaves vary,
	allowing us to write $\varphi$
	explicitly.	

\begin{rem}\label{floors}
	The classical choice for $R_{\langle\iota\rangle}$
		is  $\big\{p\in\HtH\;\big|\;|p_1||p_2|\geq1\big\}$.
	Cohn
		\cite{Cohn1965shape,Cohn1965floors,Cohn1966,Cohn1969}
		studied how to
		approximate a fundamental domain for all of $\G$
		by intersecting this with the classical parallelepiped 
		model of $R_\U$.
	He found that in all but the case $K=\QQ(\sqrt5)$,
		this has an infinite-volume boundary
		due to $3$-dimensional
		regions approaching $\partial(\HtH)$.
	He introduced the notion of the ``floor''
		as an alternative representation of this boundary,
		having finite volume at the sacrifice of connectedness.
	The current authors explored applications to this
		of our new choice for $R_\U$,
		but found the slight improvements
		not worth the computational complexity.
\end{rem}

\subsubsection{\bf Dirichlet domains}\label{sec:prelim sub: FD sub:DD}

When $\mathcal{X}$
	is a metric space,
	we can use its metric to form a type of
	fundamental domain with additional geometric properties.

\begin{defn}\label{defn:dir}
	Let $G$
		be a group of isometries acting on a topological space
		$\mathcal{X}$,
		let $d_\mathcal{X}$
		be a metric on $\mathcal{X}$,
		and let $c\in\mathcal{X}$
		satisfy $\Delta_G(c)=\{1\}$.
	Then the \emph{Dirichlet domain
		for $G$
		with respect to $d_\mathcal{X}$,
		centered at $c$},
		is
		$$\Dir_c(G):=\Big\{x\in\mathcal{X}\;\Big|\;
			\forall\g\in\G,\
			d_\mathcal{X}(c,x)\leq d_\mathcal{X}\big(c,\g(x)\big)
			\Big\}.$$
\end{defn}

Then $\Dir_c(G)$
	is convex and tiles $\mathcal{X}$
	under the group action.
Each pair of sides of $\Dir_c(G)$
	is contributed by an isometry and its inverse,
	which are identified by that isometry under the group action.
The set of isometries that contribute the sides of $\Dir_c(G)$
	generate the group $G$,
	and when $G$
	is finitely generated,
	so will be the number of sides.
	\cite{Engel1986}
	
We can identify $\Dir_c(G)$
	and its sides with the following tools.

\begin{defn}\samepage\label{defn:mediatriz}
	Let $\mathcal{X}$,
		$G$
		and $c$
		be as in Definition \ref{defn:dir}.
	Let $p,q\in\mathcal{X}$
		with $p\neq q$.
	The \emph{semispace contributed (to $\Dir_c(G)$)
		by $g$
		(with respect to $d_\mathcal{X}$)}
		is
		$$E_c(g):=\big\{x\in\mathcal{X}\;\big|\;
			d_\mathcal{X}(x,c)\leq
			d_\mathcal{X}\big(g(x),c\big)\big\},$$
		and the \emph{mediatrize contributed
		(to $\Dir_c(G)$) by $g$
		(with respect to $d_\mathcal{X}$)}
		is the set of points at equality.
\end{defn}

Thus,
	$$\Dir_c(G)=\underset{g\in G\smallsetminus\{1\}}
		{\bigcap}E_c(g)$$
	and,
	since $m_c(g)=\partial E_c(g)$,
	each side of $\Dir_c(G)$
	is a portion of a mediatriz.
A convenient characterization of $m_c(g)$
	is as the set of points equidistant from $c$
	and $g^{-1}(c)$.
To see this,
	take the defining equation for $m_c(g)$
	and apply the isometry $g^{-1}$
	to the arguments of the distance function on the right hand side.
That is,
\begin{align}\label{m}
	m_c(g)=\big\{x\in\mathcal{X}\;\big|\;
		d_\mathcal{X}(x,c)=
		d_\mathcal{X}\big(x,g^{-1}(c)\big)\big\}.
\end{align}	

\begin{rem}\samepage
	We prefer the Spanish term ``mediatriz''
		(plural: ``mediatrices'')
		to the more common term ``perpendicular bisector,''
		usually defined with respect to the geodesic from $c$
		to $g^{-1}(c)$.
	Our reason is that a pair of distinct points in $\HtH$
		does not have a unique geodesic connecting them,
		since its metric is the $\ell^1$
		sum over the $\Hyp^2$
		metrics,
		similarly to how the Manhattan metric
		does not give unique geodesics.
\end{rem}

Our main result,
	Theorem \ref{thm:FDD},
	is phrased in terms of notation constructed throughout the article,
	in such a way that the proof is provided by the content
	up to the statement of the theorem.
The idea is to create a geometrically accurate fundamental domain
	for $\Delta$,
	and give an algorithm to find
	the sides of each cusp section $\T_\varphi^3$
	along with their gluing maps,
	including the Anosov diffeomrophism.
	
\section{Lattice Deformations}\label{sec:lattices}

In this section,
	we construct a fundamental domain for $\U$
	as a union of Dirichlet domains for its action
	on the leaves of the foliation $\F$.
Consider the function
\begin{align}\label{delta}
\begin{split}
	\delta:(\HtH)\times(\HtH)
		\rightarrow&\ \RR^{\geq0},\\
	\big((p_1,p_2),(q_1,q_2)\big)\mapsto&\
		\frac{|p_1-q_1|^2}{\Im(p_1)\Im(q_1)}
			+\frac{|p_2-q_2|^2}{\Im(p_2)\Im(q_2)}.
\end{split}
\end{align}
We will use $\delta$
	to describe the sides of our fundamental domain
	as solution sets to cubic polynomials.
	
\begin{rem}
	By way of motivation,
		$\delta$
		is a simplification of the standard metric on $\HtH$
		formed by eliminating reliance on transcendental functions.
	This grants us more manageable computations
		at the sacrifice of the triangle inequality.
	That is,
		$\delta$
		is not a metric on $\HtH$,
		but a semi-metric.
	Lemma \ref{lem:delta}
		sets us up to use $\delta$
		as desired regardless,
		and its proof illustrates the specific relationship between $\delta$
		and the standard metric.
\end{rem}

For each $y_1,y_2\in\RR^+$,
	let
	$$\delta_{y_1,y_2}:=\delta|_{\F(y_1,y_2)\times\F(y_1,y_2)}.$$
For part (3) below,
	recall from \S\ref{sec:prelim sub:HBS}
	the notation $\tau_z=
		\begin{pmatrix}
			1 & z\\
			0 & 1
		\end{pmatrix}$
	where $z\in\ZZ_K$,
	and that elements of this form comprise the group $\U$.
Also,
	we abbreviate $E_{(y_1i,y_2i)}$
	by $E_{y_1,y_2}$,
	and $m_{(y_1i,y_2i)}$
	by $m_{y_1,y_2}$,
	and these always denote subsets of the leaf $\F(y_1,y_2)$.
	
\begin{lem}\samepage\label{lem:delta}
	\begin{enumerate}
		\item[]
		\item
			The function $\delta$
				is invariant under the action of $\PSL_2(\RR)$.
		\item
			For each $y_1,y_2\in\RR^+$,
				the restriction
				$\delta_{y_1,y_2}$
				is a metric on the leaf $\F(y_1,y_2)$.
		\item	
			Within the leaf $\F(y_1,y_2)$
				and with respect to the metric $\delta_{y_1,y_2}$,
				the semispace $E_{y_1,y_2}(\tau_z)$
				is the solution set to
			\begin{align*}
				\frac{2x_1z+z^2}{y_1^2}
					+ \frac{2x_2\sigma(z)+\sigma(z)^2}{y_2^2}
					\geq0
			\end{align*}
				and the mediatriz $m_{y_1,y_2}(\tau_z)$
				is the set of points at equality.
	\end{enumerate}
\end{lem}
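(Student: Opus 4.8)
The plan is to prove the three parts essentially by direct computation, using that $\delta$ packages together two copies of a quantity attached to $\Hyp^2$ whose $\PSL_2(\RR)$-invariance is classical.

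\textbf{Part (1).} First I would recall that for a single copy of $\Hyp^2$, the expression $\chi(p,q) := |p-q|^2/(\Im(p)\Im(q))$ is $\PSL_2(\RR)$-invariant; indeed it is a standard fact that $\chi$ is a monotone function of the hyperbolic distance, namely $\chi(p,q) = 2(\cosh \dist_{\Hyp^2}(p,q) - 1)$, which one verifies by reducing to $p = yi$, $q = xi$ via an isometry and then computing both sides. Since $\dist_{\Hyp^2}$ is $\PSL_2(\RR)$-invariant, so is $\chi$. Because $\G$ acts on $\HtH$ by $\g(p_1,p_2) = (\g(p_1),\sigma(\g)(p_2))$ with $\g,\sigma(\g)\in\PSL_2(\RR)$, and $\delta = \chi(p_1,q_1) + \chi(p_2,q_2)$, invariance of $\delta$ under $\G$ (indeed under $\PSL_2(\RR)\times\PSL_2(\RR)$ acting factorwise, hence under the diagonal-twisted action) follows immediately. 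I would state this as a one-line consequence rather than belabor it.

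\textbf{Part (2).} On a fixed leaf $\F(y_1,y_2)$, the bijection $\pi_{y_1,y_2}$ of (\ref{pi}) identifies the leaf with $\RR^2$ via $(x_1+y_1i,x_2+y_2i)\mapsto(x_1,x_2)$. Since $\Im(p_1)=\Im(q_1)=y_1$ and $\Im(p_2)=\Im(q_2)=y_2$ are constant on the leaf, and $|p_1-q_1| = |x_1-x_1'|$, $|p_2-q_2| = |x_2-x_2'|$, the restriction reads
\begin{align*}
	\delta_{y_1,y_2}\big((x_1,x_2),(x_1',x_2')\big)
		= \frac{(x_1-x_1')^2}{y_1^2} + \frac{(x_2-x_2')^2}{y_2^2}.
\end{align*}
This is the squared norm associated to the inner product $\mathrm{diag}(y_1^{-2},y_2^{-2})$ on $\RR^2$; more precisely $\sqrt{\delta_{y_1,y_2}}$ is the Euclidean metric pulled back by the linear isomorphism $(x_1,x_2)\mapsto(x_1/y_1,x_2/y_2)$. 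I will prove the claim at the level of $\delta_{y_1,y_2}$ directly: it is symmetric, vanishes exactly when $(x_1,x_2)=(x_1',x_2')$ (a sum of two nonnegative terms, each zero iff the corresponding coordinates agree), and — although $\delta$ itself fails the triangle inequality globally — its restriction to a leaf, being (the square of) a pullback of the Euclidean metric, does satisfy it. One subtlety to flag: the statement calls $\delta_{y_1,y_2}$ "a metric," but $\delta_{y_1,y_2}$ is the \emph{square} of the scaled Euclidean metric, so the triangle inequality as literally stated for $\delta_{y_1,y_2}$ holds only because $a^2+b^2 \le (a+b)^2$ covers subadditivity here — I would either note that $\delta_{y_1,y_2}$ is a metric because $\sqrt{\delta_{y_1,y_2}}$ is and the $E$/mediatriz computations only use $\sqrt{\delta_{y_1,y_2}}(c,x)\le\sqrt{\delta_{y_1,y_2}}(g(x),c)\iff\delta_{y_1,y_2}(c,x)\le\delta_{y_1,y_2}(g(x),c)$, or else interpret "metric" as meaning exactly this squared-Euclidean object, which suffices for Definition~\ref{defn:dir}. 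This is the point I expect to require the most care in phrasing, since it is really a matter of making the monotone-reparametrization remark precise rather than a hard argument.

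\textbf{Part (3).} Here I specialize to $c = (y_1 i, y_2 i)$, i.e. $\pi_{y_1,y_2}(c) = (0,0)$, and to $g = \tau_z$. By (\ref{tau}), $\tau_z(x_1+y_1i,x_2+y_2i) = ((x_1+z)+y_1i,(x_2+\sigma(z))+y_2i)$, which in $\pi_{y_1,y_2}$-coordinates is translation by $(z,\sigma(z))$. Then by Definition~\ref{defn:mediatriz} and the leaf formula from Part (2),
\begin{align*}
	E_{y_1,y_2}(\tau_z) = \left\{(x_1,x_2)\;\middle|\;
		\frac{x_1^2}{y_1^2} + \frac{x_2^2}{y_2^2}
		\le \frac{(x_1+z)^2}{y_1^2} + \frac{(x_2+\sigma(z))^2}{y_2^2}\right\}.
\end{align*}
Subtracting the left side from the right and expanding $(x_1+z)^2 - x_1^2 = 2x_1 z + z^2$ and likewise in the second factor gives exactly the stated inequality $\frac{2x_1 z + z^2}{y_1^2} + \frac{2x_2\sigma(z)+\sigma(z)^2}{y_2^2}\ge 0$, with the mediatriz $m_{y_1,y_2}(\tau_z)$ being the equality locus; this is consistent with the characterization (\ref{m}) of $m_c(g)$ as points equidistant from $c$ and $g^{-1}(c)=\tau_{-z}(c)$. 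Part (3) is therefore pure algebra once Parts (1)–(2) are in place; there is no real obstacle, only bookkeeping in tracking $z$ versus $\sigma(z)$ across the two factors.
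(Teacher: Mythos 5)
Your proposal is correct and follows essentially the same route as the paper's proof: part (1) via the standard relation between $|p-q|^2/(\Im p\,\Im q)$ and $\dist_{\Hyp^2}$, part (2) by computing the restriction as a weighted squared-Euclidean expression in the $\pi_{y_1,y_2}$-coordinates, and part (3) by expanding the defining inequality so the $x_1^2$ and $x_2^2$ terms cancel. The one place you go beyond the paper is the caveat in part (2) about $\delta_{y_1,y_2}$ being the \emph{square} of a metric rather than a metric in the literal sense; that caution is warranted (the paper's own proof asserts each summand is ``a metric on $\RR$'' obtained by scaling the squared Euclidean metric, which glosses over exactly this point), and your observation that only the order comparison in Definition~\ref{defn:dir} is used downstream is the right way to dispose of it.
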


\begin{proof}
	The standard distance formula on $\Hyp^2$
		is
		$$d_{\Hyp^2}(w,z)=\log\left(x+\sqrt{x^2-1}\right)$$
		where
	\begin{align*}
		x&=1+\frac{|z-w|^2}{2\Im{w}\Im{z}},
	\end{align*}
		so all dependence of $d_{\Hyp^2}(w,z)$
		on $w$
		and $z$
		occurs in the term $\dfrac{|z-w|^2}{\Im{w}\Im{z}}$.
	Since $d_{\Hyp^2}$
		is invariant under the action of $\PSL_2(\RR)$,
		so is this term,
		and since $\delta$
		the sum over such terms,
		$\delta$
		inherits that property as well.

	To prove part (2),
		recall the definition of $\F(y_1,y_2)$
		from (\ref{FH}),
		and let $p,q\in\F(y_1,y_2)$.
	Then $\exists x_1, x_2, x_3, x_4\in\RR$
		such that $p=(x_1+iy_1, x_2+iy_2)$
		and $q=(x_3+iy_1, x_4+iy_2)$,
		and,
		using (\ref{delta}),
		we compute
	\begin{align}\label{leafmetric}
		\delta_{y_1,y_2}(p,q)
		&=y_1^{-2}|x_1-x_3|^2+y_2^{-2}|x_2-x_4|^2.
	\end{align}
	Each summand is a metric on $\RR$,
		obtained by scaling the squared Euclidean metric
		by a positive constant.
	The result follows because the sum is the $\ell^1$
		metric over these on $\RR^2
		=\pi_{y_1,y_2}\big(\F(y_1,y_2)\big)$.
		
	For part (3),
		recalling Definition \ref{defn:dir},
		$E_{y_1,y_2}(\tau_z)$
		is the solution set in $\F(y_1,y_2)$
		to the inequality
		$$\delta_{y_1,y_2}\big((x_1+y_1i,x_2+y_2i),(y_1i,y_2i)\big)
			\leq\delta_{y_1,y_2}
			\big(\tau_z(x_1+y_1i,x_2+y_2i),(y_1i,y_2i)\big),$$
		and $m_{y_1,y_2}(\tau_z)$
		is the set of points at equality.
	Carrying out the action of $\tau_z$,
		the right hand side of this is
		$\delta\big(x_1+z+y_1i,x_2+\sigma(z)+y_2i),(y_1i,y_2i)\big)$
		where $\sigma$
		is the non-trivial element of the Galois group $\Gal(K:\QQ)$
		(and observe that $x_1+z,x_2+\sigma(z)\in\RR$).
	Next,
		rewrite each side of the inequality
		according to (\ref{leafmetric}).
	Rearranging terms
		(notice that the $x_1^2$
		and $x_2^2$
		terms cancel)
		yields the desired formula.
\end{proof}

For each pair $y_1,y_2\in\RR^+$,
	let $\T(y_1,y_2):=\Dir_{(y_1i,y_2i)}(\U)$,
	the Dirichlet domain for the action of $\U$
	on the leaf $\F(y_1,y_2)$
	with respect to the metric $\delta_{y_1,y_2}$,
	centered at $(y_1i,y_2i)$.
As discussed in \ref{sec:prelim sub:FD sub:U},
	these $\T(y_1,y_2)$
	slices are flat tori which,
	ranging over $y_1,y_2\in\RR^+$,
	foliate $(\Hyp^2\times\Hyp^2)/\U$.
We choose our fundamental domain for $\U$
	to be the union of these slices,
\begin{align}\label{RU}
	\R_\U:=\underset{y_1,y_2\in\RR}{\bigsqcup}\T(y_1,y_2).
\end{align}
Similarly,
	we extend the semispaces and mediatrices
	from the leaves across $\HtH$,
	and introduce the following notation.
For $z\in\ZZ_K$,
	define
\begin{align*}
	E(z)&:=\underset{y_1,y_2\in\RR}{\bigsqcup}
		E_{y_1,y_2}(\tau_z),\text{ and}\\
	m(z)&:=\underset{y_1,y_2\in\RR}{\bigsqcup}
		m_{y_1,y_2}(\tau_z),
\end{align*}
	so that $E(z)\cap\F(y_1,y_2)=E_{y_1,y_2}(\tau_z)$
	and $m(z)\cap\F(y_1,y_2)=m_{y_1,y_2}(\tau_z)$.
Then some collection of the $m(z)$
	(over $z\in\ZZ_K\smallsetminus\{0\}$)
	form the sides of $\R_\U$,
	and
	(from Lemma \ref{delta}, part (3))
	each $m(z)$
	is an algebraic variety in $\HtH$
	defined by the cubic polynomial
\begin{align}\label{mediatrizey}
	2zx_1+z^2+
		\Big(\frac{y_1}{y_2}\Big)^2
		\left(2\sigma(z)x_2+\sigma(z)^2\right)&=0
\end{align}
	in the four variables $x_1,x_2\in\RR$,
	and $y_1,y_2\in\RR^+$.

The next lemma evokes the projections
	$\pi_{y_1,y_2}:\F(y_1,y_2)\rightarrow\RR^2$
	defined by (\ref{pi}).
We have $\pi_{y_1,y_1}(c)=(0,0)$,
	and the orbit of $(0,0)$
	under $\U$
	in this projection is the same for all $y_1,y_2$,
	yet $\pi_{y_1,y_2}\big(m_{y_1,y_2}(z)\big)$
	depends on $y_1$
	and $y_2$
	due to the scaling from one local metric to another.

\begin{lem}\phantomsection\label{lem:levels}\samepage
	\begin{enumerate}
		\item[]
		\item
			The projection $\pi_{y_1,y_2}\big(m_{y_1,y_2}(\tau_z)\big)
				\subset\RR^2$
				is a Euclidean line.
		\item
			The pair of lines $\pi_{y_1,y_2}\big(m_{y_1,y_2}(\tau_z)\big)$
				and $\pi_{y_1,y_2}\big(m_{y_1,y_2}(\tau_z')\big)$
				are parallel if and only if
				$\exists q\in\QQ$
				such that $z'=qz$.
		\item
			If $\dfrac{y_1}{y_2}=\dfrac{y_1'}{y_2'}$,
				then $\pi_{y_1,y_2}\big(\T(y_1,y_2)\big)=
				\pi_{y_1',y_2'}\big(\T(y_1',y_2')\big)$.
		\item
			For each $y_1,y_2\in\RR^+$,
				$\pi_{y_1,y_2}\big(\T(y_1,y_2)\big)$
				is symmetric about $(0,0)$,
				is either a parallelogram or a hexagon,
				and these deform continuously with $y_1,y_2$.
	\end{enumerate}
\end{lem}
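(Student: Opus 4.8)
The plan is to carry everything through the bijection $\pi_{y_1,y_2}\colon\F(y_1,y_2)\to\RR^2$ together with part~(3) of Lemma~\ref{lem:delta}. By~(\ref{leafmetric}), $\pi_{y_1,y_2}$ is an isometry of $\big(\F(y_1,y_2),\delta_{y_1,y_2}\big)$ onto $\RR^2$ with the metric of the positive-definite form $q_{y_1,y_2}(x_1,x_2):=y_1^{-2}x_1^2+y_2^{-2}x_2^2$, and by~(\ref{tau}) it identifies the $\U$-action with the group of translations by the lattice $\Lambda:=\{(z,\sigma(z))\mid z\in\ZZ_K\}$. Hence $\pi_{y_1,y_2}\big(\T(y_1,y_2)\big)$ is the Dirichlet domain of $\Lambda$ (a Voronoi cell) centered at the origin for $q_{y_1,y_2}$, and, $\pi_{y_1,y_2}$ being a bijection, it also equals $\bigcap_{z\in\ZZ_K\smallsetminus\{0\}}\pi_{y_1,y_2}\big(E_{y_1,y_2}(\tau_z)\big)$, each factor the half-plane given by Lemma~\ref{lem:delta}(3).

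Parts~(1)--(3) are then short. For~(1): Lemma~\ref{lem:delta}(3) exhibits $\pi_{y_1,y_2}\big(m_{y_1,y_2}(\tau_z)\big)$ as the zero set of $\tfrac{2z}{y_1^2}x_1+\tfrac{2\sigma(z)}{y_2^2}x_2+\tfrac{z^2}{y_1^2}+\tfrac{\sigma(z)^2}{y_2^2}$; for $z\neq0$ both $z$ and $\sigma(z)$ are nonzero (as $\sigma$ is an automorphism of $K$), so the linear part is nonzero and the set is a Euclidean line. For~(2): this line has Euclidean normal proportional to $(z\,y_2^2,\ \sigma(z)\,y_1^2)$, so the lines for $\tau_z$ and $\tau_{z'}$ (same $y_1,y_2$) are parallel iff $z\,\sigma(z')-\sigma(z)\,z'=0$ (the $y_i$ cancel from the determinant), i.e.\ iff $z'/z$ is fixed by $\sigma$, i.e.\ iff $z'/z\in\QQ$ (the fixed field of $\sigma$), which is exactly $z'=qz$ with $q\in\QQ$. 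For~(3): multiplying the inequality of Lemma~\ref{lem:delta}(3) by $y_1^2>0$ rewrites $\pi_{y_1,y_2}\big(E_{y_1,y_2}(\tau_z)\big)$ as $\big\{(x_1,x_2):2zx_1+z^2+(y_1/y_2)^2\big(2\sigma(z)x_2+\sigma(z)^2\big)\geq0\big\}$, depending on $y_1,y_2$ only through the level $t:=y_1/y_2$; intersecting over $z$ gives the claimed equality and shows that part~(4) concerns the one-parameter family obtained by varying $t\in\RR^+$.

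For part~(4): central symmetry about $(0,0)$ is immediate, since $\Lambda=-\Lambda$ and the form $q_{y_1,y_2}$ is even, so replacing $z$ by $-z$ shows $(x_1,x_2)$ satisfies every defining inequality of the Dirichlet domain iff $(-x_1,-x_2)$ does. That the domain is a parallelogram or a hexagon is the classical description of Dirichlet domains of rank-$2$ lattices for a Euclidean metric: at most $2(2^2-1)=6$ vectors of $\Lambda$ are Voronoi-relevant, so the domain is a bounded, centrally symmetric convex polygon cut out by at most three pairs of parallel mediatrices, hence a hexagon in general and a parallelogram in the degenerate case (equivalently, one invokes the reduction theory of the binary form $q_{y_1,y_2}$). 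For the continuous deformation: over any compact interval $[t_0,t_1]\subset\RR^+$ the metrics $q$ are uniformly comparable, so the covering radii of $\Lambda$ are uniformly bounded, whence only finitely many $z\in\ZZ_K$ ever contribute a non-redundant mediatriz there; by~(\ref{mediatrizey}) those mediatrices have coefficients polynomial in $t$, so $\pi_{y_1,y_2}\big(\T(y_1,y_2)\big)$ is a finite intersection of half-planes with continuously moving boundaries, its finitely many candidate vertices move continuously, and, since it always tiles $\RR^2$ under $\Lambda$, its Lebesgue area equals the $t$-independent covolume of $\Lambda$ and it cannot collapse; hence it varies continuously (e.g.\ in the Hausdorff metric) with $y_1,y_2$.

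I expect the main obstacle to be part~(4), specifically the continuity claim: one must bound, uniformly over compact ranges of $t$, the set of $z$ contributing sides, and rule out degeneration of the domain, for which the constancy of its Lebesgue area --- it is a fundamental domain for a fixed cocompact lattice --- is the decisive input; the parallelogram/hexagon dichotomy itself is classical. Parts~(1)--(3) follow immediately from Lemma~\ref{lem:delta}(3), elementary linear algebra over $K$, and the fact that $\QQ$ is the fixed field of $\sigma$.
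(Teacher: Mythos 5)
Your proof is correct and follows essentially the same route as the paper's: project to $\RR^2$ via $\pi_{y_1,y_2}$, read off the mediatrices as Euclidean lines from Lemma~\ref{lem:delta}(3) (equivalently (\ref{mediatrizey})), observe that the resulting half-plane system depends on $y_1,y_2$ only through the level $y_1/y_2$, and invoke the classical classification of centrally symmetric planar lattice Voronoi cells for the parallelogram/hexagon dichotomy. Your write-up is in places more complete than the paper's --- you prove both directions of (2) and justify the continuity and non-degeneration in (4) via finiteness of the contributing $z$ and the constant covolume of the lattice, points the paper passes over quickly --- but the underlying argument is the same.
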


\begin{proof}
	By Lemma \ref{lem:delta} (3),
		the mediatriz $m_{y_1,y_2}(\tau_z)\subset\F(y_1,y_2)$
		is the solution set to equation (\ref{mediatrizey})
		in the $(x_1,x_2)$-coordinates,
		and $\T(y_1,y_2)$
		is bounded by these mediatrices.
	Parts (1) and (3) follow immediately.
	Part (2) follows from the fact that $\forall z\in\ZZ_K$
		and $\forall q\in\QQ$,
		$\sigma(qz)=q\sigma(z)$.
	
	Since
		$\big\{\big(z,\sigma(z)\big)\;\big|\; z\in\ZZ_K\big\}
			\subset\RR^2$
		is discrete,
		there are finitely many lines
		$\pi_{y_1,y_2}\big(m_{y_1,y_2}(\tau_z)\big)$
		contributing sides to $\pi_{y_1,y_2}\big(\T(y_1,y_2)\big)$.
	Altering the level $y_1/y_2$
		in (\ref{mediatrizey})
		deforms these lines continuously and,
		since $\sigma(-z)=-\sigma(z)$,
		they occur in pairs $\pi_{y_1,y_2}\big(m_{y_1,y_2}(\pm z)\big)$
		arranged symmetrically about the origin.
	Since $\T(y_1,y_2)$
		is a Dirichlet domain for the action of $\U$
		on $\F(y_1,y_2)$,
		we know that $\pi_{y_1,y_2}\big(\T(y_1,y_2)\big)$
		is convex and tiles the plane via translational symmetry. 
	The only possible number of sides
		for a convex Euclidean polygon that does this are $3, 4$
		and $6$,
		but since $\T(y_1,y_2)$
		has order $2$
		rotational symmetry,
		the number of sides must be $4$
		or $6$.
\end{proof}

Lemma \ref{lem:levels}
	tells us that along each fixed height $y_1y_2$
	(a $3$-dimensional space along a hyperbola in Figure \ref{fig:FDD};
	see \S\ref{sec:prelim sub:FD sub:U}),
	the fundamental domain $\R_\U$
	has the same $3$-dimensional
	shape:
	a continuum of parallelograms and hexagons arranged
	symmetrically about a central axis.
We next gain an explicit description of this by controlling the
	distribution of the parallelograms,
	as well as which $z\in\ZZ_K$
	contribute them.

\begin{prop}\label{prop:algorithm}
	In the $3$-dimensional subspace
		of $\R_\U$
		at some fixed height,
		the parallelogram cross-sections are distributed discretely
		along the axis of symmetry in the following way.
	If $\T(y_1,y_2)$
		is a parallelogram whose sides
		are contributed by $\pm z, \pm z'\in\ZZ_K$
		(choosing $z,z'>0$),
		then $y_1/y_2=\sqrt{\dfrac{-zz'}{\sigma(zz')}}$.
	The next parallelogram as $y_1/y_2$
		increases is contributed by $\pm(z+z')$
		and whichever of the pairs $\pm z$
		or $\pm z'$
		that has smaller absolute value under $\sigma$.
\end{prop}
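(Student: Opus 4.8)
The plan is to recognize $\pi_{y_1,y_2}\!\big(\T(y_1,y_2)\big)$ as the Voronoi cell of a single rank-two lattice under a one-parameter family of inner products indexed by the level $t=y_1/y_2$, and then run the reduction theory of binary quadratic forms for that family, keeping every step algebraic in the spirit of Lemma \ref{lem:delta}. By Lemma \ref{lem:delta}(3) and (\ref{mediatrizey}), the line $\pi_{y_1,y_2}\!\big(m_{y_1,y_2}(\tau_z)\big)$ depends only on $t$, and for the inner product $\langle v,w\rangle_t:=v_1w_1+t^2v_2w_2$ on $\RR^2$ it is the perpendicular bisector of $0$ and $-(z,\sigma(z))$; tracing the half-spaces $E_{y_1,y_2}(\tau_z)$ the same way, $\pi_{y_1,y_2}\!\big(\T(y_1,y_2)\big)$ is exactly the Voronoi cell at $0$ of the lattice $\Lambda:=\big\{(z,\sigma(z))\mid z\in\ZZ_K\big\}$ under $\langle\cdot,\cdot\rangle_t$, hence depends only on $t$. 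A pair of opposite sides ``contributed by $\pm z$'' means the sides on the bisectors of $0$ and $\pm(z,\sigma(z))$; identifying $z\in\ZZ_K$ with $(z,\sigma(z))$, the relevant pairing is $\langle z,z'\rangle_t=zz'+t^2\sigma(z)\sigma(z')$. The one computational tool will be: if $P$ is $\langle\cdot,\cdot\rangle_t$-equidistant from $0$, $a$ and $b$, then
\[
	\|P-(a+b)\|_t^2-\|P\|_t^2=2\langle a,b\rangle_t,
\]
which is immediate from $2\langle P,a\rangle_t=\|a\|_t^2$ and $2\langle P,b\rangle_t=\|b\|_t^2$.

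By Lemma \ref{lem:levels}(4) the cell is always a parallelogram or a hexagon. If it is a parallelogram with sides contributed by $\pm z,\pm z'$, then, being the Voronoi cell with every other bisector outside, it is the intersection of the two $\langle\cdot,\cdot\rangle_t$-slabs perpendicular to $z$ and to $z'$, and the fact that $\Lambda$ tiles $\RR^2$ by it forces $\{z,z'\}$ to be a $\ZZ$-basis of $\ZZ_K$. Applying the identity at two adjacent vertices of this parallelogram, with $(a,b)=(z,z')$ and $(a,b)=(z,-z')$ respectively: the bisector of $0$ and $z+z'$ can leave the first vertex uncut only if $\langle z,z'\rangle_t\ge 0$, and the bisector of $0$ and $z-z'$ can leave the second uncut only if $\langle z,z'\rangle_t\le 0$; hence $\langle z,z'\rangle_t=0$, i.e.\ $zz'+t^2\sigma(z)\sigma(z')=0$, so $t^2=-zz'/\sigma(zz')$. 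In particular $\sigma(z)\sigma(z')<0$, so exactly one of $\sigma(z),\sigma(z')$ is negative; and since $z+z'\neq 0$ gives $\sigma(z)\neq-\sigma(z')$, we get $|\sigma(z)|\neq|\sigma(z')|$, so ``the one of smaller $|\sigma|$'' is unambiguous.

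For the deformation, fix a level $t_0=\sqrt{-zz'/\sigma(zz')}$ at which the cell is a parallelogram with sides from $\pm z,\pm z'$, and let $w$ be whichever of $z,z'$ has the smaller $|\sigma|$. Since $\langle z,z'\rangle_t=zz'(1-t^2/t_0^2)$ with $zz'>0$, for $t$ just above $t_0$ we have $\langle z,z'\rangle_t<0$; by the identity this cuts off from the cell its two vertices on the $z+z'$ diagonal (via the bisectors of $0$ and $\pm(z+z')$) and leaves the other two uncut, so the cell becomes the hexagon with sides from $\pm z,\pm z',\pm(z+z')$ (just below $t_0$ one gets $z-z'$ instead). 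As $t$ increases, this hexagon persists until one of its three pairs of corners collapses; by the identity, the only collapses possible for $t>t_0$ are at $\langle z,z+z'\rangle_t=0$, producing the parallelogram on $\{z,z+z'\}$, or at $\langle z',z+z'\rangle_t=0$, producing the one on $\{z',z+z'\}$ — the third pair collapses only back at $t_0$, where $\langle z,z'\rangle_t=0$. Now $\langle v,z+z'\rangle_t=v(z+z')+t^2\sigma(v)\sigma(z+z')$ with $v(z+z')>0$, so it has a positive root precisely when $\sigma(v)\sigma(z+z')<0$; and since $\sigma(z+z')=\sigma(z)+\sigma(z')$ is a sum of two reals of opposite sign, its sign is that of the one of larger $|\sigma|$, so only $\langle w,z+z'\rangle_t=0$ has a positive root $t_1$, while the analogous equation for the other of $z,z'$ has none. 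As $\langle w,z+z'\rangle_{t_0}=\|w\|_{t_0}^2>0$, we get $t_1>t_0$; no corner collapses on $(t_0,t_1)$, so there the cell stays this hexagon and becomes the parallelogram on $\{w,z+z'\}$ exactly at $t_1$ — the next parallelogram, contributed by $\pm(z+z')$ and $\pm w$. Finally, the hexagon locus is open (the strict sign inequalities among the relevant pairings persist under small changes of $t$), and every parallelogram level has hexagon levels immediately on each side, so the parallelogram levels form a discrete subset of $\RR^+$: they are distributed discretely along the axis of symmetry.

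I expect the main obstacle to be the combinatorial bookkeeping of the Voronoi cell of $\Lambda$: that just above $t_0$ the new generator is $\pm(z+z')$ rather than $\pm(z-z')$, and that as $t$ grows the hexagon can only collapse along the two pairs of corners named above, so no further generator intervenes before $t_1$. This is exactly Selling/Voronoi reduction of binary quadratic forms, which is classical; the point is that the equidistance identity reduces it to tracking the sign of $\langle\cdot,\cdot\rangle_t$, after which the arithmetic relating $\operatorname{sign}\sigma(z+z')$ to $|\sigma(z)|$ versus $|\sigma(z')|$ is immediate.
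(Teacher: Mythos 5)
Your proof is correct, and it reaches the same conclusions by a genuinely different computational route. The paper works directly with the explicit line equations (\ref{mediatrizey}): it computes by hand that $l_z$, $l_{z'}$ and $l_{z+z'}$ are concurrent exactly when $k^2=-zz'/\sigma(zz')$, checks by a similar coordinate computation that no other $l_{az+bz'}$ passes through the corners, and then tracks the slopes of the lines as $k$ varies to decide which of $l_{\pm(z\pm z')}$ enters the parallelogram and which of $l_{\pm z},l_{\pm z'}$ eventually exits. You instead package the whole leafwise geometry into the one-parameter family of inner products $\langle v,w\rangle_t=v_1w_1+t^2v_2w_2$ on the fixed lattice $\Lambda=\{(z,\sigma(z))\}$ and replace every concurrency and slope computation by the single polarization identity $\|P-(a+b)\|_t^2-\|P\|_t^2=2\langle a,b\rangle_t$ at an equidistant point $P$. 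This buys several things: the level of a parallelogram appears as the orthogonality condition $\langle z,z'\rangle_t=0$ rather than as the outcome of a concurrency calculation; the ``which side persists'' dichotomy becomes the statement that exactly one of $\langle z,z+z'\rangle_t$, $\langle z',z+z'\rangle_t$ has a positive root in $t$, decided by the sign of $\sigma(z)+\sigma(z')$; and you correctly upgrade the paper's remark that ``$z$ and $z'$ are independent'' to the statement actually needed, namely that they form a $\ZZ$-basis, which you extract from the tiling property of the Voronoi cell. The one step you leave as ``bookkeeping'' --- that no bisector other than those of $\pm(z\pm z')$ meets the corners of the parallelogram at $t_0$ --- is also the step the paper dispatches with ``a similar computation shows,'' and it is in fact a one-line consequence of your own identity: at a corner equidistant from $0,a,b$ with $\langle a,b\rangle_{t_0}=0$, the vector $\alpha a+\beta b$ is equidistant iff $\alpha(\alpha-1)\|a\|_{t_0}^2+\beta(\beta-1)\|b\|_{t_0}^2=0$, which for integers forces $\alpha,\beta\in\{0,1\}$; combined with your use of Lemma \ref{lem:levels}(4) to forbid more than six sides on $(t_0,t_1)$, this closes the argument at a level of rigor at least matching the paper's.
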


\begin{proof}
	Recall (from \S\ref{sec:prelim sub:FD sub:U}),
		the level of a point $(x_1+iy_1,x_2+iy_2)$
		is $y_1/y_2$,
		and we'll now denote this by $k$.
	By Lemma \ref{lem:levels} (3),
		it suffices to look at $\T(k,1)$.
	For $z\in\ZZ_K$,
		denote the line
		$\pi_{k,1}\big(m_{k,1}(z)\big)\subset\RR^2$
		by $l_z$.
	Now fix $z,z'\in\ZZ_K^+$
		so that $l_{\pm z}$
		and $l_{\pm z'}$
		bound a parallelogram cross section at a fixed height of $\R_\U$,
		and denote this parallelogram by $P$.
	Using (\ref{mediatrizey})
		to write the equations for $l_z$,
		$l_{z'}$,
		and $l_{z+z'}$,
		we compute that these have a common intersection point
		if and only if $k=\sqrt{\frac{-zz'}{\sigma(zz')}}$.
	Thus $l_{-(z+z')}$
		passes through the opposite corner of $P$
		and,
		by a similar computation,
		the lines $l_{\pm(z-z')}$
		pass the through other pair of opposite corners of $P$.
	By hypothesis,
		no lines contributed by $\ZZ_K$
		enter $P$
		and observe also that no others pass through its corners:
		indeed $z$
		and $z'$
		are independent,
		so that any other element of $\ZZ_K$
		could be written as $az+bz'$
		(with $a,b\in\ZZ$
		not both $\pm1$),
		and a similar computation shows that
		$l_{az+bz'}$
		cannot pass through these corners
		at this height.
		
	Now,
		increasing $k$
		slightly changes the slopes of the lines so that $l_{z+z'}$
		enters the (stretching)
		parallelogram bounded by $l_{\pm z}$
		and $l_{\pm z'}$,
		and $l_{z-z'}$
		moves away from it.
	Decreasing $k$
		slightly has the opposite effect.
	(Figure \ref{fig:chop}
		shows an example of this.)
	\begin{figure}[!htb]
		\centering
		\includegraphics[scale=.15]{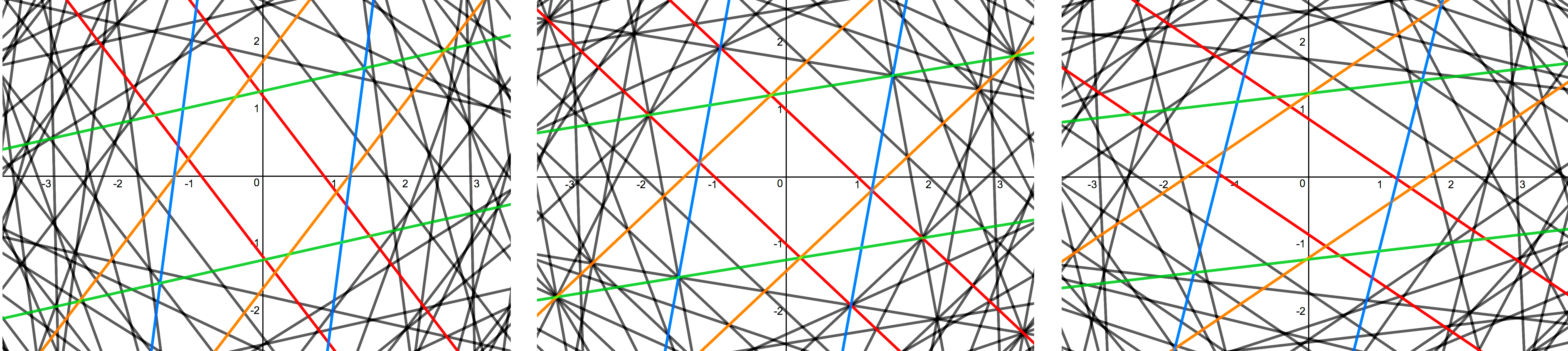}
		\caption{\cite{Desmos}
			Some mediatrices
				$\left\{m(\tau_z)\cap\F(k,1)\;\middle|\;
					z\in\ZZ_K\right\}$
				in the case $n=2$
				exemplify the situation described in the proof
				of Proposition \ref{prop:algorithm}.
			The mediatrices at height $y_1/y_2=1$
				are in the center,
				and those with $y_1/y_2$
				slightly decreased and increased
				are shown on the left and right,
				respectively.
			The pairs of mediatrices
				for $z=\pm1, \pm\sqrt n, \pm(1+\sqrt n)$
				and $\pm(1-\sqrt n)$
				are colored red, orange, blue, and green,
				respectively,
				and the mediatrices for all other $z\in\ZZ_K$
				that enter the shown region are shown in black.
			Note that when $n\equiv_41$,
				this will occur at heights other than $1$.}
		\label{fig:chop}
	\end{figure}
	Since this forms a hexagon,
		Lemma \ref{lem:levels}
		tells us that no other elements of $\ZZ_K$
		can simultaneously truncate more corners.
	Thus $P$
		is a parallelogram precisely at
		$k=\sqrt{\frac{-zz'}{\sigma(zz')}}$
		and,
		as the level increases,
		$l_{\pm(z+z')}$
		contributes a new pair of sides that
		persist until the next parallelogram is formed.
	These parallelograms are discretely distributed
		due to the discreteness of
		$\big\{\big(z,\sigma(z)\big)\;\big|\;z\in\ZZ_K\big\}$.
	
	Lastly,
		again analyzing the slopes of the lines as the level $k$
		increases,
		we see that $l_{\pm z}$
		move outside of $P$
		if and only if $|\sigma(z)|<|\sigma(z')|$,
		so the one with lower absolute value under $\sigma$
		persists in contributing a boundary as the other ceases to do so.
\end{proof}

\section{The Cusp Shape}
\label{sec:CS}

Let $\R_\Delta:=\R_\U\cap\R_\D$,
	our choice of fundamental domain for the cusp group $\Delta$.
We will also sometimes write $\R_n$
	for $\R_\Delta$
	to specify that $K=\QQ(\sqrt n)$
	(and $\Delta$
	is the stabilizer of $(\infty,\infty)$
	in $\PSL_2(\ZZ_K)$).
In this section,
	we state our main result,
	Theorem \ref{thm:FDD},
	deriving a precise description of $\R_n$
	and providing an algorithm for computing it effectively
	from the fundamental unit $\fu$
	of $\ZZ_K$.
	
As we saw in the previous section,
	$\R_\U$
	is comprised of equally shaped hypersurfaces,
	one at each fixed height,
	which are identical up to uniform scaling of the local metrics.
Recall from \S\ref{sec:prelim sub:RD}
	that $\R_\D$
	is the wedge bounded by the hypersurface at $y_1=y_2$
	(at level $1$)
	and the hypersurface at $y_1=\fu^4 y_2$
	(at level $\fu^4$).
Let
\begin{align}
	\Tsec_n(h):=\underset{1\leq y<\fu^4}{\bigsqcup}\T(hy,h),
\end{align}
	and when $h$
		does not matter,
		we will simply write $\Tsec_n$.
Then $\Tsec_n$
	is a geometric model for the cusp section and we have
	$\R_n=\underset{h\in\RR^+}{\bigsqcup}\Tsec_n(h)$.

We will now compute the shapes of the torus slices of $\overline{\Tsec_n}$
	at levels ($y_1/y_2=$)
	$1$
	and $\fu^4$,
	and see how they glue together under the action of $\D$.
At level $1$,
	we take advantage of the fact that the metric $\delta_{y_1,y_2}$
	is squared Euclidean when $y_1=y_2$.
(This was the motivation for our choice of this
	as a boundary component of $\R_\D$
	in \S\ref{sec:prelim sub:RD}.)

\begin{lem}\label{lem:T11}
	\begin{enumerate}
		\item[]
		\item\label{lem:T11 part:not1}
			If $n\not\equiv_41$,
				then $\pi_{y,y}\big(\T(y,y)\big)$
				is a rectangle
				whose sides are contributed by
				$$\left\{\tau_z\mid z=\pm1,\pm\sqrt n\right\}.$$
		\item\label{lem:T11 part:1}
			If $n\equiv_41$,
				then $\pi_{y,y}\big(\T(y,y)\big)$
				is a hexagon
				whose sides are contributed by
				$$\left\{\tau_z\;\Big|\; z=\pm1,\;
					\pm\frac{1+\sqrt n}{2},\;
					\pm\frac{1-\sqrt n}{2}\right\}.$$
	\end{enumerate}
\end{lem}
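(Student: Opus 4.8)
The plan is to exploit the flatness of $\delta_{y,y}$ on the leaf $\F(y,y)$ in order to realise $\pi_{y,y}\big(\T(y,y)\big)$ as the Euclidean Voronoi cell of an explicit planar lattice, and then to read off its shape from the reduction theory of binary quadratic forms. Specialising (\ref{leafmetric}) to $y_1=y_2=y$ gives $\delta_{y,y}(p,q)=y^{-2}\big(|x_1-x_3|^2+|x_2-x_4|^2\big)$, which is a strictly increasing function of the Euclidean distance between $\pi_{y,y}(p)$ and $\pi_{y,y}(q)$. Since a Dirichlet domain (Definition \ref{defn:dir}) is cut out by inequalities among distances-to-the-centre, it is unchanged by this monotone reparametrisation, so $\pi_{y,y}\big(\T(y,y)\big)$ is precisely the Euclidean Voronoi cell, centred at $\pi_{y,y}(yi,yi)=(0,0)$, of the lattice $\Lambda_n:=\big\{(z,\sigma(z))\mid z\in\ZZ_K\big\}$, on which $\tau_z$ acts as translation by $(z,\sigma(z))$ by (\ref{tau}). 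In particular this shape is independent of $y$, and by Lemma \ref{lem:levels} (4) it is a parallelogram or a hexagon; it remains to decide which, and to identify the $\tau_z$ whose mediatrices contribute its sides --- that is, to find the Voronoi-relevant vectors of $\Lambda_n$.

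Take the $\ZZ$-basis $e_1=(1,1)$ (from $z=1$) and $e_2=(\alpha,\sigma(\alpha))$ (from $z=\alpha$) of $\Lambda_n$; since $(w,\sigma(w))\cdot(w',\sigma(w'))=ww'+\sigma(ww')=\tr(ww')$, the Gram matrix is $\left(\begin{smallmatrix}2 & \tr(\alpha)\\ \tr(\alpha) & \tr(\alpha^2)\end{smallmatrix}\right)$. If $n\not\equiv_41$, then $\alpha=\sqrt n$, so $\tr(\alpha)=0$ and $\tr(\alpha^2)=2n$: the basis is orthogonal, $\Lambda_n$ is a rectangular lattice, and the Voronoi cell of a rectangular lattice is the rectangle cut out by the mediatrices of $\pm e_1$ and $\pm e_2$ only (no vector $\pm e_1\pm e_2$ is relevant, as it is not the unique shortest vector in its coset modulo $2\Lambda_n$). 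These mediatrices belong to $\tau_z$ with $z=\pm1,\pm\sqrt n$, which is part (1).

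If $n\equiv_41$, then $\alpha=\frac{1+\sqrt n}{2}$, so $\tr(\alpha)=1$ and $\tr(\alpha^2)=\tr\!\big(\alpha+\tfrac{n-1}{4}\big)=\frac{n+1}{2}$, and the Gram matrix is $\left(\begin{smallmatrix}2 & 1\\ 1 & (n+1)/2\end{smallmatrix}\right)$. As $n\geq5$ we have $\frac{n+1}{2}\geq3$, so this form is Gauss-reduced, though it sits on the wall $2(e_1\cdot e_2)=|e_1|^2$ between two reduction domains. Granting --- see below --- that $\Lambda_n$ is therefore not rectangular, Lemma \ref{lem:levels} (4) forces its Voronoi cell to be a hexagon, and its six Voronoi-relevant vectors are $\pm e_1$, $\pm e_2$, and $\pm(e_1-e_2)$; it is $e_1-e_2$ rather than $e_1+e_2$ that occurs because $e_1\cdot e_2=\tr(\alpha)=1>0$, so that $|e_1-e_2|^2=\frac{n+1}{2}<|e_1+e_2|^2$. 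These correspond to $\tau_z$ with $z=\pm1$, $z=\pm\alpha=\pm\frac{1+\sqrt n}{2}$, and $z=\pm(1-\alpha)=\pm\frac{1-\sqrt n}{2}$, which is part (2). (One could instead obtain the same list by specialising (\ref{mediatrizey}) to $y_1=y_2$ and intersecting the resulting lines directly, in the style of the proof of Proposition \ref{prop:algorithm}.)

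The one delicate point, which I expect to be the main obstacle, is the claim that $\Lambda_n$ is not rectangular when $n\equiv_41$: because the reduced Gram form lies on a wall we have $|e_2|=|e_1-e_2|$, so a priori the cell might degenerate from a hexagon to a parallelogram. I would settle this by an elementary count with the integral form $Q(a,b)=2a^2+2ab+\frac{n+1}{2}b^2$, which records the squared length of the vector coming from $z=a+b\alpha$. Completing the square gives $Q(a,b)=2(a+\tfrac b2)^2+\tfrac n2 b^2$, so over $\ZZ^2$ the minimum value $2$ is attained only at $\pm(1,0)$, while the next value $\frac{n+1}{2}$ is attained at exactly the two distinct pairs $\pm(0,1)$ and $\pm(1,-1)$ (the value $\frac{n+1}{2}$ is not of the form $2a^2$ since $n\equiv_41$, and $|b|\geq2$ forces $Q\geq2n$). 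A rectangular lattice with a unique shortest pair can have its second-shortest squared length attained by two pairs only when that value equals four times the shortest one, i.e.\ here only if $\frac{n+1}{2}=8$, which would force $n=15\not\equiv_41$. Hence $\Lambda_n$ is not rectangular, and the cell is the hexagon described above.
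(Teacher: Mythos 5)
Your route is in essence the paper's own --- reduce, via the flatness of $\delta_{y,y}$, to the Euclidean Voronoi cell of the lattice $\Lambda_n=\big\{(z,\sigma(z))\mid z\in\ZZ_K\big\}$ and identify its relevant vectors --- carried out more carefully (Gram matrix, relevance modulo $2\Lambda_n$). The monotone reparametrisation step, part (1), and the identification of the six candidate vectors for $n\equiv_41$ are all correct, and you have put your finger on the right delicate point: since $\tr(\alpha)=1$, the points $\big(\pm\alpha,\pm\sigma(\alpha)\big)$ lie exactly \emph{on} the lines $x_1+x_2=\pm1$, so hexagonality is genuinely a boundary case (the paper's own proof passes over this silently, simply calling the lattice ``hexagonal'').

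However, your resolution of that delicate point has a gap. The claim that $\tfrac{n+1}{2}$ is ``the next value'' of $Q(a,b)=2\big(a+\tfrac b2\big)^2+\tfrac n2b^2$ after the minimum $2$ fails for every squarefree $n\equiv_41$ with $n\geq17$: there $Q(2,0)=8<\tfrac{n+1}{2}$, so the second-smallest value is $8$, attained by the single pair $\pm(2,0)$. Your rectangular-lattice criterion (``second-shortest value attained by two pairs forces that value to be four times the shortest'') then has nothing to bite on, and the contradiction is obtained only for $n=5$ and $n=13$. The statement is still true and the repair is short; two options: (i) apply to the coset $e_1+e_2+2\Lambda_n$ the same relevance test you used in part (1): its elements are $ae_1+be_2$ with $a,b$ odd, for which $Q(a,b)=2\big(a+\tfrac b2\big)^2+\tfrac n2b^2\geq\tfrac12+\tfrac n2$ with equality exactly at $\pm(1,-1)$, so $\pm(e_1-e_2)$ is the unique shortest pair in its coset, hence Voronoi-relevant, and the cell has six sides; or (ii) observe that the only vectors of $\Lambda_n$ orthogonal to the unique shortest pair $\pm e_1$ are the integer multiples of $e_1-2e_2$, and $\{e_1,\,e_1-2e_2\}$ generates only an index-$2$ sublattice, so $\Lambda_n$ admits no orthogonal basis and cannot be rectangular.
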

	
\begin{proof}		
	For an element $z=a+b\sqrt n\in\ZZ_K$
		with $a,b\in\QQ$,
		and any $y_1,y_2\in\RR^+$,
		we have
		$$\big(\pi_{y_1,y_2}\circ\tau_z|_{\F(y_1,y_2)}\big)(y_1i,y_2i)
			=(a+b\sqrt n,a-b\sqrt n).$$
	Therefore the orbit of $\U$
		on $(y_1i,y_2i)$
		at $\F(y_1,y_2)$
		forms a rectangular lattice if $n\not\equiv_41$,
		and forms a hexagonal lattice if $n\equiv_41$,
		where in both cases the lattice has the diagonal
		lines of symmetry $x_1=\pm x_2$.
	Now let $y_1=y_2=y$.
	The mediatrices $m(\tau_z)\cap\F(y,y)$
		are the Euclidean perpendicular bisectors
		between the points $(0,0)$
		and $\big(z,\sigma(z)\big)$.
	Thus $\T(y,y)$
		is rectangular if $n\not\equiv_41$
		and hexagonal if $n\equiv_41$
		(see Figure \ref{fig:FH11lattices}).
\begin{figure}[!htb]
	\centering
	\includegraphics[scale=.45]{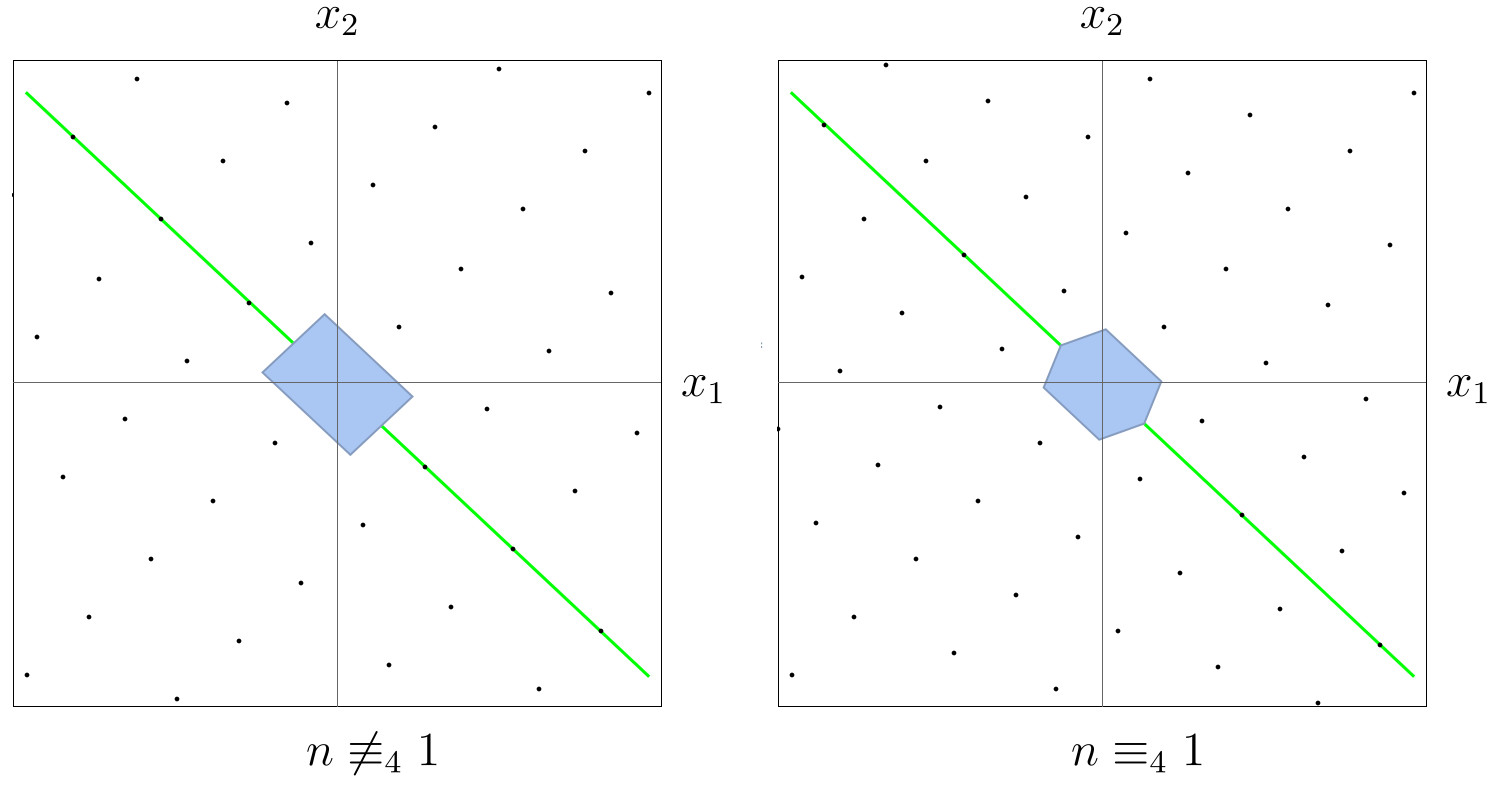}
	\caption{\cite{Mathematica}
		The points show the two lattice types formed by the orbit of $\U$
			on $(0,0)$
			under $\pi_{y_1,y_2}$,
			with the line of symmetry $x_1=-x_2$
			shown in green.
		The two types of toroidal Euclidean Dirichlet domains
			when $y_1=y_2$
			are shown in blue.}
	\label{fig:FH11lattices}
\end{figure}
		
	In both cases,
		sides are contributed by the orbit points closest to the origin,
		which,
		after projecting by $\pi_{y,y}$,
		are $\pm(1,1)$
		(contributed by $\tau_z$
		with $z=\pm1$).
	Additional sides are then contributed by the closest points to the origin
		that lie between the lines
		$$\pi_{y,y}\big(m_{y,y}(\tau_{\pm1})\big)
			=\big\{(x_1,x_2)\;\big|\;x_1+x_2=\pm1\big\}.$$
	When $n\not\equiv_41$,
		these are $\pm\left(\frac{\sqrt n}{2},-\frac{\sqrt n}{2}\right)$
		(contributed by $z=\pm\sqrt n$)
		and when $n\equiv_41$,
		they are $\pm\left(\frac{1+\sqrt n}{2},\frac{1-\sqrt n}{2}\right)$
		and $\pm\left(\frac{1-\sqrt n}{2},\frac{1+\sqrt n}{2}\right)$
		(contributed by $\tau_z$
			with $z=\pm\frac{1+\sqrt n}{2}$
			and $\pm\frac{1-\sqrt n}{2}$,
			respectively).
\end{proof}

To describe the torus in $\overline{\Tsec_n}$
	at level $\fu^4$,
	and also see how this attaches to the torus at level $1$
	under the action of $\D$,
	we will prove something more general.
Recall from (\ref{eta})
	that $\eta_\ell=
		\begin{pmatrix}
			\fu^\ell & 0\\
			0 & \fu^{-\ell}
		\end{pmatrix}$,
	that $\eta_\ell(x_1+iy_1,x_2+iy_2)
		=\big(\fu^{2\ell}(x_1+iy_1),\fu^{-2\ell}(x_2+iy_2)\big)$,
	and that $\Delta=\langle\eta_1\rangle$.

\begin{lem}\label{lem:etaT11}
	The image under $\eta_{1}$
		of $m_{y,y}(\tau_z)$
		is $m_{\fu^2y,\fu^{-2}y}(\tau_{\fu^2z})$.
\end{lem}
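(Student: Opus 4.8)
The plan is to deduce the lemma as an equivariance statement from two facts already in place: the equidistant description of mediatrices in (\ref{m}), and the invariance of $\delta$ under $\PSL_2(\RR)$ acting separately on each factor of $\HtH$ (the two summands defining $\delta$ are each $\PSL_2(\RR)$-invariant, exactly as in the proof of Lemma \ref{lem:delta}(1)). First I would record that, by (\ref{eta}), $\eta_1$ acts on $\HtH$ as $(p_1,p_2)\mapsto(\fu^2 p_1,\fu^{-2}p_2)$; since $p\mapsto\fu^2 p$ and $p\mapsto\fu^{-2}p$ are the M\"obius actions of $\begin{pmatrix}\fu&0\\0&\fu^{-1}\end{pmatrix}$ and $\begin{pmatrix}\fu^{-1}&0\\0&\fu\end{pmatrix}$, this makes $\eta_1$ a $\delta$-isometry. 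It also carries the leaf $\F(y,y)$ onto $\F(\fu^2 y,\fu^{-2}y)$, taking the center $(yi,yi)$ to the center $(\fu^2 yi,\fu^{-2}yi)$; and a one-line $2\times2$ multiplication gives the conjugation identity $\eta_1\tau_z\eta_1^{-1}=\tau_{\fu^2 z}$ (the $\ell=1$ instance of the conjugation used around (\ref{semidirectUD})).

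Then I would run the short formal argument. Starting from
$$m_{y,y}(\tau_z)=\big\{x\in\F(y,y)\;\big|\;\delta(x,c)=\delta\big(x,\tau_z^{-1}(c)\big)\big\}$$
with $c=(yi,yi)$, apply $\eta_1$ to the set and to both arguments of $\delta$ on the right (legitimate because $\eta_1$ preserves $\delta$), then rewrite $\eta_1(c)=(\fu^2 yi,\fu^{-2}yi)=:c'$ and $\eta_1\tau_z^{-1}(c)=\tau_{\fu^2 z}^{-1}\eta_1(c)=\tau_{\fu^2 z}^{-1}(c')$ using the conjugation identity. This displays $\eta_1\big(m_{y,y}(\tau_z)\big)$ as the set of $x'\in\F(\fu^2 y,\fu^{-2}y)$ with $\delta(x',c')=\delta\big(x',\tau_{\fu^2 z}^{-1}(c')\big)$, which is $m_{\fu^2 y,\fu^{-2}y}(\tau_{\fu^2 z})$ by one more application of (\ref{m}).

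As a concrete cross-check one can bypass the invariance machinery entirely: a point $(x_1+iy,x_2+iy)$ of $m_{y,y}(\tau_z)$ maps under $\eta_1$ to $(\fu^2 x_1+i\fu^2 y,\fu^{-2}x_2+i\fu^{-2}y)$, and substituting $x_1'=\fu^2 x_1$, $x_2'=\fu^{-2}x_2$, $y_1'=\fu^2 y$, $y_2'=\fu^{-2}y$, $w=\fu^2 z$ into the equation defining $m_{y_1',y_2'}(\tau_w)$ from Lemma \ref{lem:delta}(3) makes every power of $\fu$ cancel and returns the equation defining $m_{y,y}(\tau_z)$ — using only $\sigma(\fu^2 z)=\fu^{-2}\sigma(z)$, which holds since $\sigma(\fu)=\pm\fu^{-1}$. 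I do not expect any genuine obstacle: the lemma is a routine compatibility check, and the only point needing care is matching exponents — that the leaf $\F(\fu^2 y,\fu^{-2}y)$ and the translation parameter $\fu^2 z$ come out with the correct powers of $\fu$ after the conjugation and the Galois action are combined.
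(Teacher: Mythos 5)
Your argument is correct, and your main line of reasoning is genuinely different from the paper's. The paper proves the lemma by brute force on the defining equation: it substitutes $\fu^2z$ for $z$ and $\fu^4$ for $y_1/y_2$ into the cubic (\ref{mediatrizey}), uses $\sigma(\fu)^2=\fu^{-2}$, applies $\eta_{-1}$ coordinatewise, and watches all powers of $\fu$ cancel --- which is precisely your ``concrete cross-check,'' so that part of your write-up reproduces the paper's proof almost verbatim. Your primary argument instead derives the lemma as pure equivariance: $\eta_1$ is a $\delta$-isometry carrying the leaf $\F(y,y)$ and its center to $\F(\fu^2y,\fu^{-2}y)$ and its center, and the conjugation identity $\eta_1\tau_z\eta_1^{-1}=\tau_{\fu^2z}$ combined with the equidistance description (\ref{m}) of mediatrices immediately transports $m_{y,y}(\tau_z)$ to $m_{\fu^2y,\fu^{-2}y}(\tau_{\fu^2z})$. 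All the ingredients you invoke are legitimately available (the invariance of $\delta$ factorwise from Lemma \ref{lem:delta}(1), and the conjugation relation recorded around (\ref{semidirectUD})), and the exponent bookkeeping checks out. What your route buys is conceptual clarity and generality: it makes visible that the lemma is an instance of the general fact that an isometry normalizing $\U$ maps Dirichlet semispaces centered at $c$ to Dirichlet semispaces centered at the image of $c$ with conjugated labels, which is also what underlies the subsequent claim $\eta_1\big(\T(y,y)\big)=\T(\fu^2y,\fu^{-2}y)$. What the paper's computation buys is continuity with the rest of the argument, which works throughout with the explicit cubic equations for the sides, and an independent verification that the powers of $\fu$ land where claimed.
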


\begin{proof}
	The equation defining the mediatriz $m_{\fu^2y,\fu^{-2}y}(\tau_{z^2})$
		is obtained by taking (\ref{mediatrizey})
		and substituting $\fu^2z$
		for $z$,
		and $\fu^4$
		for $y_1/y_2$.
	Recalling that $\sigma(\fu)^2=\fu^{-2}$,
		this yields
%
%
%
		$$2zx_1+\fu^2z^2+
			2\fu^{4}\sigma(z)x_2+\fu^{2}\sigma(z)^2=0.$$		
	Applying $\eta_{-1}$($=\eta_1^{-1}$)
		to this line has the effect of sending it to the leaf $\F(y,y)$
		and altering its defining equation by appending a factor of $\fu^2$
		to $x_1$,
		and a factor of $\fu^{-2}$
		to $x_2$,
		after which all the $\fu$
		cancel,
		leaving the equation for $m_{y,y}(\tau_z)$.	
	Thus $\eta_{1}^{-1}\big(m_{\fu^2y,\fu^{-2}y}(\tau_{\fu^2z})\big)
			=m_{y,y}(\tau_z)$.
\end{proof}

Since $\eta_1$
	is an isometry of $\HtH$,
	Lemma \ref{lem:etaT11},
	implies a one-to-one correspondence
	between the sides of $\T(\fu^2y,\fu^{-2}y)$
	and the sides of $\T(y,y)$,
	as follows:
	$\tau_z$
	contributes a side to $\T(y,y)$
	if and only if $\tau_{z^2}$
	contributes a side to $\T(\fu^2y,\fu^{-2}y)$.
Moreover,
	$\eta_1\big(\T(y,y)\big)=\T(\fu^2y,\fu^{-2}y)$,
	so that $\eta_1$
	is the Anosov diffeomorphism that provides the final side-pairing
	on $\Tsec_n$.

Combining this with the other results thus far completes
	the geometric description of each fundamental domain $\R_n$
	(over all squarefree $n\in\NN$),
	and gives a straightforward algorithm for computing it.	
Before we collect these results into our theorem,
	we provide a map that will make the description more precise
	and facilitate computations.
Let
\begin{align*}
	\Psi:&\ \HtH\rightarrow\RR^3,\quad
		(x_1+y_1i,x_2+y_2i)\mapsto
		(x_1,x_2,\tfrac{y_1}{y_2}),
\end{align*}
	and write $k=y_1/y_2$
	(the level,
	as before)
	for the third coordinate.

\begin{thm}\label{thm:FDD}
	For $K=\QQ(\sqrt n)$
		($n$ squarefree),
		a cusp group of the Hilbert-Blumenthal surface
		$(\HtH)/\PSL_2(\ZZ_K)$
		admits a fundamental domain in which each cusp section
		$\Tsec_n$
		is described as follows.
		
\begin{center}
	\textbf{\textit Sides}
\end{center}
	The region $\Psi(\Tsec_n)$
		lies between the planes at $k=1$
		and $k=\fu^4$.
	The rest of the sides of $\Psi(\Tsec_n)$
		are the surfaces
		defined by the cubic polynomials
		$$2zx_1+z^2+k^2\big(2\sigma(z)x_2+\sigma(z)^2\big)
			=0$$
		contributed over the $z\in\ZZ_K$
		that occur in the list $L$
		constructed by the following algorithm.
	\begin{enumerate}
		\item
			Let $z_1=1$.
		\begin{enumerate}
			\item
				If $n\not\equiv_41$,
					let $z_1'=\sqrt n$.
			\item
				If $n\equiv_41$,
					let $z_1'=\frac{1+\sqrt{n}}{2}$.
		\end{enumerate}
		\item
			Let $z_m=\fu^2z_1$
				and $z_m'=\fu^2z_1'$.
		\item
			Start with $i=1$,
				and while $\{z_i,z_i'\}\neq\{z_m,z_m'\}$,
				let $z_{i+1}=z_i+z_i'$
				and let $z_{i+1}'$
				be the element of $\{z_i,z_i'\}$
				that is minimal under $|\sigma|$.
		\item
			Let $L=\{\pm z_1,\pm z_1',\dots,\pm z_m,\pm z_m'\}$,
				and if $n\equiv_41$,
				also include $\sigma(z_1')$.
	\end{enumerate}
	
\begin{center}
	\textbf{\textit Torus Shapes}
\end{center}
	Orthogonal slices of $\Psi(\Tsec_n)$
		along the $k$-axis
		are flat tori.
	These are all hexagons,
		with the following exceptions.
	For $i<m$:
		the sides contributed by $\pm z_i$
		and $\pm z_i'$
		bound a parallelogram at
		$k=\sqrt{\dfrac{-z_iz_i'}{\sigma(z_iz_i')}}$.
	If $n\not\equiv_41$,
		this also holds for $i=m$,
		at $k=\fu^4$
		(otherwise,
		the parallelogram bounded by the surfaces defined by $\pm z_m$
		and $\pm z_m'$
		crosses the $k$-axis
		above $\Psi(\Tsec_n)$).

\begin{samepage}
\begin{center}
	\textbf{\textit Gluing Maps}
\end{center}
	For each $\pm z_i$
		contributing sides as above,
		that pair of sides are attached toroidally by $\tau_{z_i}\in\U$.
	Finally,
		the tori in $\Tsec_n$
		at $k=1$
		and $k=\fu^4$
		are attached by the Anosov diffeomorphism $\eta_1=
		\begin{pmatrix}
			\fu & 0\\
			0 & \fu^{-1}
		\end{pmatrix}\in\D$.
\qed
\end{samepage}
\end{thm}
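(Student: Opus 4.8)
The statement collects the results of Sections~\ref{sec:lattices} and~\ref{sec:CS}, so the plan is to assemble them and to keep careful track of the two endpoint cases $n\not\equiv_41$ and $n\equiv_41$, rather than to prove anything new. I would begin with the \emph{Sides} claim. By (\ref{RD}) the region $\R_\D$ constrains the level $y_1/y_2$ to the interval $[1,\fu^4)$, and $\Psi$ records this level as its third coordinate $k$; since $\Psi$ is injective on the fixed-height slice $\Tsec_n=\Tsec_n(h)$ (the first imaginary part is $hk$ and the second is $h$, so the point is recovered from $(x_1,x_2,k)$ and $h$), the image $\Psi(\Tsec_n)$ is a faithful copy of $\Tsec_n$ lying between the planes $k=1$ and $k=\fu^4$. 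The remaining sides are the mediatrices $m(z)$, which by Lemma~\ref{lem:delta}(3) and (\ref{mediatrizey}) are cut out by $2zx_1+z^2+(y_1/y_2)^2\big(2\sigma(z)x_2+\sigma(z)^2\big)$; writing $k$ for $y_1/y_2$ gives the cubics in the statement.

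The work is then in showing the algorithm outputs exactly the list $L$ of contributing $z\in\ZZ_K$. By Lemma~\ref{lem:levels} the cross section of $\R_\U$ at a fixed height is a one-parameter family of parallelograms and hexagons symmetric about the $k$-axis, so it suffices to follow this family for $k\in[1,\fu^4]$. Lemma~\ref{lem:T11} gives the slice at $k=1$: a rectangle with generators $\pm z_1=\pm1$, $\pm z_1'=\pm\sqrt n$ when $n\not\equiv_41$; and a hexagon whose third generator is $\pm\sigma(z_1')$ (note $\sigma(z_1')=z_1-z_1'$) when $n\equiv_41$, in which case this hexagon is, in the language of Proposition~\ref{prop:algorithm}, the one just below the parallelogram bounded by $\pm z_1,\pm z_1'$ --- which is why the algorithm may treat $\{z_1,z_1'\}$ as a parallelogram pair. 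Proposition~\ref{prop:algorithm} governs every transition as $k$ grows: the parallelogram bounded by $\pm z_i,\pm z_i'$ sits at $k=\sqrt{-z_iz_i'/\sigma(z_iz_i')}$, the next is bounded by $\pm(z_i+z_i')=\pm z_{i+1}$ and by the $|\sigma|$-smaller of $\pm z_i,\pm z_i'$, namely $\pm z_{i+1}'$, and discreteness of $\{(z,\sigma(z))\mid z\in\ZZ_K\}$ makes the process finite. Finally, Lemma~\ref{lem:etaT11} and the bijection following it show the top slice at $k=\fu^4$ is $\eta_1$ applied to the $k=1$ slice, with generators $\fu^2$ times those at $k=1$; so the process terminates exactly when $\{z_i,z_i'\}=\{\fu^2z_1,\fu^2z_1'\}=\{z_m,z_m'\}$. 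For $n\equiv_41$ the $k=\fu^4$ hexagon also has the generator $\pm\fu^2\sigma(z_1')$, but from $z_m=z_{m-1}+z_{m-1}'$ and $z_m'\in\{z_{m-1},z_{m-1}'\}$ one gets $\pm\fu^2\sigma(z_1')=\pm z_{m-1}$ or $\pm z_{m-1}'$, already in $L$; so step~(4) need only append the unscaled $\sigma(z_1')$. This determines $L$.

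For \emph{Torus Shapes}, Lemma~\ref{lem:levels}(4) already says each orthogonal slice is a parallelogram or a hexagon, and Proposition~\ref{prop:algorithm} places the parallelograms at the levels $k=\sqrt{-z_iz_i'/\sigma(z_iz_i')}$ for $i<m$ and leaves hexagons elsewhere; substituting $z_m=\fu^2z_1$, $z_m'=\fu^2z_1'$ gives $k=\fu^4\sqrt{-z_1z_1'/\sigma(z_1z_1')}$ for the would-be last parallelogram, equal to $\fu^4$ when $n\not\equiv_41$ (then $\sqrt{-z_1z_1'/\sigma(z_1z_1')}=1$) and above $\fu^4$ when $n\equiv_41$. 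For \emph{Gluing Maps}, each pair of sides of a Dirichlet domain contributed by an isometry and its inverse is identified by that isometry (Definition~\ref{defn:dir} and the remarks after it), and $\tau_{-z}=\tau_z^{-1}$, so $\pm z_i$ give a toroidal side-pairing by $\tau_{z_i}\in\U$; and $\eta_1(\T(y,y))=\T(\fu^2y,\fu^{-2}y)$ from Lemma~\ref{lem:etaT11} identifies the tori at $k=1$ and $k=\fu^4$ by $\eta_1\in\D$, the Anosov diffeomorphism of the mapping torus $\Tsec_n$. That $\R_n=\R_\U\cap\R_\D$ is a fundamental domain for $\Delta$ is immediate from $\Delta\cong\U\rtimes\D$ (see \S\ref{sec:prelim sub:FD sub:U}).

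The one step that needs genuine care is matching the self-contained algorithm to Proposition~\ref{prop:algorithm} at the two endpoints: verifying that the mediant-type recursion on $\{z_i,z_i'\}$ really closes up onto $\{\fu^2z_1,\fu^2z_1'\}$ rather than overshooting $k=\fu^4$, and handling the $n\equiv_41$ case, where the extreme slices are hexagons rather than parallelograms so that the extra generator $\sigma(z_1')$ must be tracked separately at the bottom and shown to be already present (in scaled form) at the top.
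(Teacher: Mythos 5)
Your proposal is correct and takes essentially the same route as the paper: the paper offers no separate proof, stating that the argument ``is provided by the content up to the statement of the theorem,'' and you assemble exactly those ingredients --- Lemma \ref{lem:delta}(3) and (\ref{mediatrizey}) for the side equations, Lemma \ref{lem:levels} and Proposition \ref{prop:algorithm} for the parallelogram/hexagon recursion, Lemma \ref{lem:T11} for the slice at $k=1$, Lemma \ref{lem:etaT11} for the slice at $k=\fu^4$ and the Anosov gluing, and the standard Dirichlet side-pairing facts for the toroidal identifications. Your explicit bookkeeping at the two endpoint levels in the $n\equiv_41$ case (in particular that $\pm\fu^2\sigma(z_1')$ already appears among $\pm z_{m-1},\pm z_{m-1}'$) only fills in details the paper leaves implicit.
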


\begin{rem}\label{rem:sign}
	There will be repetition in the list $L$,
		as $z_{i+1}'$
		is always equal to one of $z_i$
		or $z_i'$.
	This notation agrees with that of Proposition \ref{prop:algorithm},
		where we tracked pairs of isometries by where the parallelogram
		tori occur.
\end{rem}

\begin{cor}
	Every Sol $3$-manifold
		is commensurable to a manifold admitting the fundamental domain
		described in Theorem \ref{thm:FDD}.
\end{cor}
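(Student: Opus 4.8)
The plan is to combine two ingredients: the theorem of McReynolds \cite{Mcreynolds2006,Mcreynolds2008} quoted in the introduction, namely that every Sol $3$-manifold is commensurable, up to diffeomorphism, to a cusp section of some Hilbert-Blumenthal surface $(\HtH)/\PSL_2(\ZZ_K)$, together with Theorem \ref{thm:FDD}, which furnishes for every squarefree $n$ an explicit fundamental domain for the cusp section $\Tsec_n$ of $(\HtH)/\PSL_2(\ZZ_K)$ with $K=\QQ(\sqrt n)$. Thus the corollary reduces to the assertion that every Sol $3$-manifold is commensurable to one of the manifolds $\Tsec_n$, and the content of the proof is this matching.

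First I would identify the commensurability invariant carried by $\Tsec_n$. By the semidirect decomposition $\Delta\cong\U\rtimes\D$ of (\ref{semidirectUD}), in which $\D=\langle\eta_1\rangle$ acts on $\U\cong\ZZ_K$ by $\tau_z\mapsto\tau_{\fu^2z}$, the cusp section $\Tsec_n$ is the torus bundle over the circle whose monodromy is multiplication by $\fu^2$ on the lattice $\ZZ_K\cong\ZZ^2$. Its monodromy eigenvalues are $\fu^{\pm2}$, so the field generated by an eigenvalue is $\QQ(\fu^2)=\QQ(\sqrt n)$, using that $\fu^2\notin\QQ$ (since $\fu^2\neq\sigma(\fu)^2=\fu^{-2}$ while $\fu^2+\fu^{-2}\in\ZZ$).

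Next I would invoke the classification of Sol $3$-manifolds: each is a mapping torus $T^3_A$ of an Anosov automorphism $A$ ($|\tr A|>2$) of the $2$-torus, and two such are commensurable exactly when the real quadratic fields $\QQ(\lambda_A)$ and $\QQ(\lambda_B)$ generated by their monodromy eigenvalues agree; the corollary needs only the ``if'' direction of this, together with the observation from the previous paragraph that $\Tsec_n$ has invariant field $\QQ(\sqrt n)$. If one wishes to prove the classification in line: $\lambda_A$ is a unit in $\ZZ_{\QQ(\lambda_A)}$ because $\lambda_A\,\sigma(\lambda_A)=\det A=\pm1$, so $\lambda_A=\pm\fu_A^a$ for the fundamental unit $\fu_A$ of $\QQ(\lambda_A)$; choosing powers $A^k$ and $B^\ell$ whose eigenvalues coincide makes the two monodromies share a characteristic polynomial, hence conjugate in $\GL_2(\QQ)$ (as $\QQ^2$ is a free rank-one module over the quadratic $\QQ$-algebra they generate); passing to $A^k$ and $B^\ell$ corresponds to finite covers $T^3_{A^k}\to T^3_A$ and $T^3_{B^\ell}\to T^3_B$, and the $\GL_2(\QQ)$-conjugacy of the monodromies, after replacing each by its action on a common finite-index sublattice it preserves, produces a common finite cover of these mapping tori, with one extra index-two step to absorb the cases $\det A=-1$ or $A$ versus $A^{-1}$.

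The main obstacle is not a computation but the dictionary between monodromy matrices and $3$-manifolds: one must use that Sol $3$-manifolds are aspherical, so that commensurability of the manifolds coincides with commensurability of the groups $\ZZ^2\rtimes\ZZ$, and one must arrange the common finite cover carefully when clearing denominators in the $\GL_2(\QQ)$-conjugacy. If McReynolds' result is used in its more general form --- every Sol $3$-manifold commensurable to a cusp section of $\PSL_2(\mathcal{O})$ for some order $\mathcal{O}$ in a real quadratic field, or of a congruence subgroup --- one additionally observes that a finite-index subgroup of $\Delta$ yields only a finite cover of the cusp section, so one may reduce to the full group $\PSL_2(\ZZ_K)$ and its cusp at $(\infty,\infty)$; the remaining cusps of $\PSL_2(\ZZ_K)$ give diffeomorphic sections because their cusp groups are conjugate in $\PSL_2(K)$. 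Once the chain $M\sim\Tsec_n$ is in place, Theorem \ref{thm:FDD} supplies the fundamental domain on $\Tsec_n$, completing the proof.
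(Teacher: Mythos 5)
Your proposal is correct, and its first paragraph is exactly the paper's (implicit) argument: the corollary is stated without proof because it is the immediate conjunction of McReynolds' theorem --- quoted in the introduction in the form ``every Sol $3$-manifold is commensurable, up to diffeomorphism, to a cusp section of some Hilbert-Blumenthal surface'' --- with Theorem \ref{thm:FDD}, which supplies the fundamental domain for the cusp section $\Tsec_n$ of every $(\HtH)/\PSL_2(\ZZ_K)$. Where you diverge is in treating ``the content of the proof'' as the matching of an arbitrary Sol $3$-manifold to some $\Tsec_n$: you re-derive the commensurability classification of Sol $3$-manifolds by the eigenvalue field $\QQ(\lambda_A)$, identify the monodromy of $\Tsec_n$ as multiplication by $\fu^2$ on $\ZZ_K$ with invariant field $\QQ(\sqrt n)$, and assemble a common finite cover via $\GL_2(\QQ)$-conjugacy of powers of the monodromies. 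That detour is sound in outline (the unit argument, rational canonical form for matrices with a common irreducible characteristic polynomial, and asphericity of Sol manifolds are all used correctly), and it is what one would need if one only had a weaker form of McReynolds' result (e.g.\ for congruence subgroups or arbitrary orders); but taking the cited statement at face value it is redundant, since the cusp section of $(\HtH)/\PSL_2(\ZZ_K)$ at $(\infty,\infty)$ \emph{is} $\Tsec_n$, and the other cusps contribute only commensurable sections as you note. So your proof buys independence from the precise formulation of McReynolds' theorem at the cost of several steps the paper deliberately outsources; the paper's version buys brevity at the cost of leaning entirely on the citation.
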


\section{Examples and Visualization}
\label{sec:ex}

We conclude by showing some uses of Theorem \ref{thm:FDD}
	to study specific examples,
	focusing on the first two $n$-values modulo $4$:
	$n=2,3$ ($\not\equiv_41$),
	and $n=5,13$ ($\equiv_41$).
Table \ref{tab:fu}
	summarizes three pieces of information that will be important
	in the calculations.
First is the fundamental unit $\fu$
	of each field $\QQ(\sqrt n)$,
	which can be found in (or computed according to) 
	\cite{Azuhata1984}.
Second is the sign of the \emph{field norm}
	$$N:\ZZ_K\rightarrow\ZZ,\quad z\mapsto z\cdot\sigma(z)$$
	of each $\fu$,
	noting in particular that this decides the sign in
	$\sigma(\fu)=\pm\fu^{-1}$.
We have one of each possible signs in each of our cases for $n$
	modulo $4$.
Third is an expression showing how to write $\fu^2$
	as a linear combination of $1$
	and $\fu$
	over $\ZZ$,
	allowing us to easily reduce the degree
	of any product of such expressions.
\begin{table}[!htb]
	\caption{Some notes to compute our examples}\label{tab:fu}
	\begin{tabular}{|c|c|c|c|}
		\hline
		\boldmath$n$
			& \boldmath$\fu$
			& \boldmath$N(\fu)$
			& \boldmath$\fu^2$\\
		\hline
		$2$
			& $1+\sqrt2$
			& $-1$
			& $1+2\fu$\\
		\hline
		$3$
			& $2+\sqrt3$
			& $1$
			& $-1+4\fu$\\
		\hline
		$5$
			& $\frac{1+\sqrt5}{2}$
			& $1$
			& $1+\fu$\\
		\hline
		$13$
			& $\frac{3+\sqrt{13}}{2}$
			& $-1$
			& $1+3\fu$\\
		\hline
	\end{tabular}
\end{table}
	
Table \ref{tab:levels}
	shows the elements of $\ZZ_K$
	that contribute sides to $\Tsec_n$
	for $n=2$,
	$3$,
	$5$,
	and $13$.
The leftmost column indicates which example is being studied
	by reminding the reader of the corresponding fundamental unit $\fu$
	(where $\sqrt n$
	will appear).
The ``level'' column
	shows the locations going up the $k$-axis
	where orthogonal slices are parallelograms and,
	for $n\equiv_41$,
	also includes the hexagonal tops and bottoms
	(otherwise already included).
Equivalently,
	these are the levels along the $k$-axis
	where the edges of the $\Tsec_n$,
	formed by the intersecting sides,
	bifurcate as a $3$-valent graph.
The ``sides'' column lists the elements of $\ZZ_K$,
	up to sign,
	that bound the tori at the corresponding levels.
The rightmost column of shows
	how those sides correspond to the $z_i,z_i'$
	notation of Theorem \ref{thm:FDD}
	(and Proposition \ref{prop:algorithm}),
	where the indices range over the sequence of parallelograms.

Notice the suggestive occurrence of an additional square root
	(not in $K$)
	in the levels when $N(\fu)=-1$.
For the $n=5$
	example,
	we computed a way of expressing these as square roots of $\fu$.
For the $n=13$
	example,
	we see $\sqrt3$
	occurring in the denominators.

\begin{table}[!htb]
      \caption{levels of $\Tsec_n$
      	where edges bifurcate,
      	for $n=2$,
	$3$,
	$5$,
	and $13$}
	\label{tab:levels}
      \centering
	\bgroup
	\def\arraystretch{1.1}
	\begin{tabular}{|c|c|c|c|}
		\hline
		\boldmath$\fu$ & {\bf level} & {\bf sides} & \boldmath$i$\\
			\hline
		\multirow{5}{*}{$1+\sqrt 2$}
			& 1 & $1,\ \sqrt2$ & $1$\\
			\cline{2-4}
			 & $\fu$ & 1,\ $\fu$ & $2$\\
			\cline{2-4}
			 & $\fu^2$ & $\fu\sqrt2,\ \fu$ & $3$\\
			\cline{2-4}
			  & $\fu^3$ & $\fu^2,\ \fu$ & $4$\\
			\cline{2-4}
			 & $\fu^4$ & $\fu^2,\ \fu^2\sqrt2$ & $5=m$\\
			\hline
		\multirow{7}{*}{$2+\sqrt3$}
			& 1 & $1,\ \sqrt3$ & $1$\\
			\cline{2-4}
			& $\fu^{1/2}$ & $1,\ 1+\sqrt3$ & $2$\\
			\cline{2-4}
			& $\fu^{3/2}$ & $\fu,\ 1+\sqrt3$ & $3$\\
			\cline{2-4}
			& $\fu^2$ & $\fu,\ \fu\sqrt3$ & $4$\\
			\cline{2-4}
			& $\fu^{5/2}$ & $\fu,\ \fu(1+\sqrt3)$ & $5$\\
			\cline{2-4}
			& $\fu^{7/2}$ & $\fu^2,\ \fu(1+\sqrt3)$ & $6$\\
			\cline{2-4}
			& $\fu^4$ & $\fu^2,\ \fu^2\sqrt3$ & $7=m$\\
			\hline
		\multirow{4}{*}{$\dfrac{1+\sqrt5}{2}$}
			& $1$ & $1,\ \fu,\ \fu^{-1}$ & (hexagonal)\\
			\cline{2-4}
			 & $\fu$ & $1,\ \fu$ & $1$\\
			\cline{2-4}
			& $\fu^3$ & $\fu^2,\ \fu$ & $2=m-1$\\
			\cline{2-4}
			& $\fu^4$ & $\fu^3,\ \fu^2,\ \fu$ & (hexagonal)\\
			\hline
		\multirow{9}{*}{$\dfrac{3+\sqrt{13}}{2}$}
			& $1$ & $1,\ \frac{1+\sqrt{13}}{2},\ \frac{1-\sqrt{13}}{2}$
				& (hexagonal)\\
			\cline{2-4}
			& $\frac{1+\sqrt{13}}{2\sqrt3}$
				& $1,\ \frac{1+\sqrt{13}}{2}$ & $1$\\
			\cline{2-4}
			 & $\fu$ & $1,\ \fu$ & 2\\
			\cline{2-4}
			& $\frac{\fu(1+\fu)}{\sqrt3}$ & $1+\fu,\ \fu$ & $3$\\
			\cline{2-4}
			& $\frac{\fu(1+2\fu)}{\sqrt3}$ & $1+2\fu,\ \fu$ & $4$\\
			\cline{2-4}
			& $\fu^3$ & $\fu^2,\ \fu$ & $5$\\
			\cline{2-4}
			& $\frac{\fu^2(1+4\fu)}{\sqrt3}$ & $\fu^2,\ 1+4\fu$
				& $6=m-1$\\
			\cline{2-4}
			& $\fu^4$ & $\fu^2,\ 1+4\fu,\ 2+7\fu$ & (hexagonal)\\
			\hline
	\end{tabular}
	\egroup
\end{table}

Figure \ref{fig:towers}
	shows plots of the cusp sections $\Tsec_n$
	for these same examples under the map $\Psi$,
	but with the third coordinate scaled logarithmically in the base $\fu$.
That is,
	we are seeing the images of the $\Tsec_n$
	under the map 
\begin{align*}
	\Psi_n:&\ \HtH\rightarrow\RR^3,\quad
		(x_1+y_1i,x_2+y_2i)\mapsto
		\left(x_1,x_2,\log_\fu\big(y_1/y_2\big)\right).
\end{align*}
Creating such plots only requires generating a list of the sides,
	which is a less arduous computation than finding the levels
	of the parallelograms.
Those parallelograms will,
	of course,
	occur on their own as a result of how the sides intersect.
\begin{figure}[!htb]
	\centering
	\includegraphics[width=\textwidth]{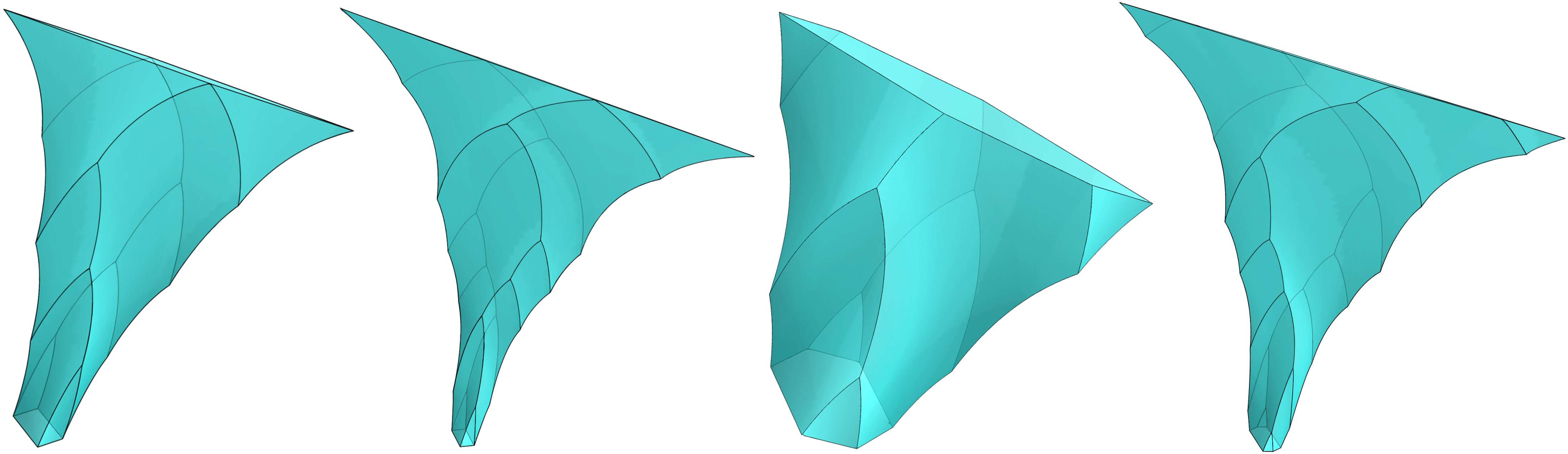}
		\caption{\cite{Mathematica}
			Computer generated images are shown of
				the images of $\Psi_n(\Tsec_n)$,
				for $n=2, 3, 5$ and $13$,
				from left to right,
				respectively.}
	\label{fig:towers}
\end{figure}

The images of $\Psi_n(\Tsec_n)$
	become arbitrarily more complicated and stretched
	(though not uniformly) as $n$
	increases,
	making rendering the graphics less feasible for many other cases.
Another way to study patterns in these shapes,
	for which computer images are not necessary,
	is to look at which pairs of sides persist
	from one parallelogram to the next,
	going up the $k$-axis,
	and that can be examined via the relatively easy computation
	of the sides.
One interesting approach is to first generate a list
	like those under the ``sides'' columns of Table \ref{tab:levels},
	where persisting sides maintain their position among the entries
	they appear in,
	and then to draw edges between the different vertical stacks of
	the same integers.
This creates a $3$-valent
	graph that is equivalent (just flipped upside down)
	to the one seen on the front of the plot
	of each $\Psi_n(\Tsec_n)$
	in Figure \ref{fig:towers},
	which repeats symmetrically on the back.
	
This thereby offers some new and fairly accessible
	techniques by which one might further study quadratic fields,
	cusp sections of Hilbert-Blumenthal surfaces,
	and
	commensurability classes of Sol $3$-manifolds.

\section{Acknowledgements}
	This research paper has been made possible
		thanks to the financial support generously given by the
		FORDECyT-CONACyT (Mexico) grant \#265667,
		Universidad Nacional Aut\'onoma de M\'exico.
	The second author was financed by grant IN106817,
		PAPIIT, DGAPA, Universidad Nacional Aut\'onoma de M\'exico.
	The authors also express their gratitude to
		Ian Agol,
		Kathleen Byrne,
		Jesse Ira Deutsch,
		Paul Garrett,
		Ben McReynolds,
		Jorge Millan
		and Walter Neumann
		for helpful suggestions and discussion;
		to the reviewer for their detailed comments and corrections;
		and to Dennis Ryan and Simon Woods
		for help with creating the computer generated images.
\bibliographystyle{plain}
\bibliography{references}	
\end{document}